\newcommand{\norm}[1]{\left\lVert#1\right\rVert}
\theoremstyle{definition}
\newtheorem{theorem}{Theorem}
\newtheorem{lemma}{Lemma}
\newtheorem{remark}{Remark}
\begin{document}

\title{HDG-POD Reduced Order Model of the Heat Equation}

\author{Jiguang Shen%
	\thanks{School  of Mathematics, University of Minnesota, MN (shenx179@umn.edu)}%
	\and
	John~R.~Singler%
	\thanks{Department of Mathematics
		and Statistics, Missouri University of Science and Technology,
		Rolla, MO (\mbox{singlerj@mst.edu}, ywzfg4@mst.edu). J.~Singler and Y.~Zhang were supported in part by National Science Foundation grant DMS-1217122.  J.~Singler and Y.~Zhang thank the IMA for funding research visits, during which some of this work was completed.}
	\and
	Yangwen Zhang%
	\footnotemark[2]
}

\date{}

\maketitle

\begin{abstract}
	We propose a new hybridizable discontinuous Galerkin (HDG) model order reduction technique based on proper orthogonal decomposition (POD).  We consider the heat equation as a test problem and prove error bounds that converge to zero as the number of POD modes increases.  We present 2D and 3D numerical results to illustrate the convergence analysis.
\end{abstract}


\section{Introduction}
Discontinuous Galerkin (DG) methods for solving partial differential equations, developed in the late 1990s, have become popular among computational scientists \cite{arnold2002unified, arnold2000discontinuous, bassi1997high, castillo2000priori, cockburn1998local}. However, the number of degrees of freedom for DG methods is much larger compared to standard continuous Galerkin (CG) methods; this is typically considered to be the main drawback of DG methods.

Hybridizable discontinuous Galerkin (HDG) methods were originally proposed by Cockburn, Gopalakrishnan, and Lazarov in \cite{cockburn2009unified} to improve traditional DG methods.  HDG methods keep the positive features of DG methods, while simultaneously lowering the number of degrees of freedom.  In addition to approximating a scalar variable and its flux, HDG methods also approximate the trace of the scalar variable on the boundary of the mesh.  The scalar and flux variables are eliminated element-by-element, which leaves a global system in terms of the approximate scalar trace only.  This greatly reduces the number of degrees of freedom compared to other DG methods.  Because of the many advantages of HDG methods, they are being actively investigated in many directions; see, e.g.,  \cite{nguyen2009implicit1,nguyen2009implicit2,cockburn2009hybridizable,nguyen2010hybridizable,nguyen2011implicit,nguyen2011high,nguyen2011hybridizable,nguyen2012hybridizable,huynh2013high,rhebergen2012space,ueckermann2016hybridizable,Bui-Thanh16,Muralikrishnan17,GaticaSequeira16,GaticaSequeira17,Oikawa16,CuiZhang14}.

Although the degrees of freedom for HDG methods is much smaller compared to other DG methods, the resulting discrete systems can have large dimensions for complex applications.  Therefore, efficient and accurate model order reduction techniques are desirable.

Proper orthogonal decomposition (POD) is a popular model reduction technique that has been successfully used in a variety of fields including fluid dynamics \cite{holmes2012turbulence}, inverse problems \cite{banks2000nondestructive}, PDE-constrained optimization \cite{sachs2010pod,gubisch2013proper}, and feedback control \cite{atwell2001reduced,kunisch2004hjb,leibfritz2006reduced,ravindran2000reduced,LeeTran05}.  Many researchers have developed POD model reduction schemes for many problems and have proved related error bounds, see, e.g., \cite{volkwein2013proper,kunisch2001galerkin,kunisch2002galerkin,homescu2005error,luo2008mixed,singler2014new,iliescu2014variational,akman2016error,jin2017analysis,chapelle2012galerkin,amsallem2014error,herkt2013convergence,del2008error,chapelle2013galerkin,Kostova-Vassilevska18,MohebujjamanRebholzXieIliescu17,XieWellsWangIliescu18}.

A POD reduced order model of a time dependent partial differential equation (PDE) is typically constructed in the following way.  First, solution data is collected and a POD procedure is used to identify a small or moderate number of global basis functions, called POD modes, that optimally approximate the data.  These global basis functions are then used in combination with a standard Galerkin method (or a Petrov-Galerkin method) to derive the reduced order model (ROM).

Since HDG methods have many excellent properties and are increasingly utilized to simulate a wide range of complex problems, in this work we propose using POD in combination with the HDG variational form to derive reduced order models of time dependent PDEs.  This HDG-POD model order reduction approach can be applied directly with the same HDG variational formulation used for the HDG simulation; we do not require the use of POD with a different Galerkin weak form to generate the ROM.

We begin our investigation of the HDG-POD model order reduction approach with a test problem: the heat equation.  Using POD with HDG is not a straightforward combination of existing methods since the HDG variational formulation is very different than the Galerkin approaches typically used to construct POD-based ROMs.  Furthermore, the standard HDG semidiscretization of the heat equation does not yield an ordinary differential equation for the spatially discrete scalar variable.  However, we are able to use POD and the HDG weak form to construct a dynamic HDG-POD reduced order model for the scalar variable in \Cref{sec:HDG_POD_ROM}.

HDG methods use a spatial mixed formulation, which yields higher accuracy for the flux variable.  For numerical methods not based on a mixed formulation, lower order convergence rates for the flux are common.  Reduced order models constructed using these types of numerical methods can potentially require a fine spatial mesh to yield highly accurate flux approximations.  In contrast, HDG generates high order convergence rates for both the scalar and flux variables simultaneously.  In the HDG-POD reduced model, we can easily recover flux approximations at computational cost $\mathcal O(r)$, where $r$ is the order of reduced model.

In \Cref{sec:error_analysis}, we analyze the HDG-POD reduced order model of the heat equation and prove error bounds that tend to zero as the order of the reduced model increases.  As mentioned above, similar convergence analyses have been performed for many standard and modified POD reduced order models.  However, since HDG-POD is not based on a standard Galerkin or Petrov-Galerin projection, the error analysis performed here is quite different from existing POD analysis work.  We present 2D and 3D numerical results illustrating the theory in \Cref{sec:numerical_results}.

We emphasize that the heat equation is only a test problem for this new HDG-POD model order reduction method.  There are many other approaches that can be used to construct ROMs for the heat equation.  We do not claim that HDG-POD can produce a better ROM for the heat equation compared to other approaches.  Again, our goal is to develop the HDG-POD approach to use in combination with HDG methods to devise and analyze ROMs for complex nonlinear PDEs, such as the incompressible Navier-Stokes equations with high Reynolds number.  The consideration of HDG-POD for the heat equation is a first step towards this goal.

\section{HDG-POD Model Order Reduction}
\label{sec:HDG_POD_ROM}

We introduce our HDG-POD model order reduction procedure for the heat equation step by step.  We begin by reviewing an HDG method for the heat equation in \Cref{subsec:HDG} and then we review the POD data approximation problem in \Cref{sec:POD}.  We give the formulation of the HDG-POD reduced order model in continuous time in \Cref{sec:HDG-POD_details}.

Let $\Omega$ be a bounded domain in $\mathbb{R}^d $ $ (d=2,3)$ with Lipschitz boundary $\Gamma$, and let $T>0$.  We consider the following heat equation
\begin{equation}\label{heat}
\begin{split}
u_t - \nabla\cdot(a\nabla u) &=f \quad  \quad  \;\;\;\;\;  \text{in} \; \Omega,\\
u&=0 \quad \quad \quad\; \text{on}\;  \Gamma,\\
u(\cdot,0)& = u_0   \quad \quad \quad \text{in}\;  \Omega, 
\end{split}
\end{equation}
where $f\in L^2(0,T;L^2(\Omega))$ is the source function, $u_0\in L^2(\Omega)$ is the initial condition, and $a\in L^{\infty}(\Omega)$ is the coefficient function.  We assume there exist positive constants $c_0$ and $c_1$ such that $0<c_0\le a^{-1} \le c_1$.

\subsection{HDG}
\label{subsec:HDG}

Next, we briefly describe an HDG method for the heat equation, which was analyzed in \cite{chabaud2012uniform}.  Let $\mathcal{T}_h$ be a collection of disjoint elements that partition $\Omega$.  We let $\partial \mathcal{T}_h$ denote the set $\{\partial K: K\in \mathcal{T}_h\}$. For an element $K$ of the collection  $\mathcal{T}_h$, $e = \partial K \cap \Gamma$ is a boundary face if the $d-1$ Lebesgue measure of $e$ is nonzero. For two elements $K^+$ and $K^-$ of the collection $\mathcal{T}_h$, $e = \partial K^+ \cap \partial K^-$ is the interior face between $K^+$ and $K^-$ if the $d-1$ Lebesgue measure of $e$ is nonzero. Let  $\varepsilon_h^{\partial}$ and $\varepsilon_h^{o}$  denote the set of boundary faces and interior faces, respectively, and we let $\varepsilon_h$ denote the union of  $\varepsilon_h^o$ and $\varepsilon_h^{\partial}$. We finally introduce
\begin{align*}
	(w,v)_{\mathcal{T}_h} = \sum_{K\in\mathcal{T}_h} (w,v)_K,   \quad\quad\left\langle \zeta,\rho\right\rangle_{\partial\mathcal{T}_h} = \sum_{K\in\mathcal{T}_h} \left\langle \zeta,\rho\right\rangle_{\partial K}.
\end{align*}

\subsubsection{The HDG Formulation: Discretization in Space}

The HDG method introduces the flux $\bm{q} = -a\nabla u$ as an unknown.  For $c = a^{-1}$, the heat equation \eqref{heat} can be rewritten as 
\begin{subequations}\label{HDG_pde}
	\begin{align}
		(c\bm{q},\bm{v})_{\mathcal{T}_h}-(u,\nabla\cdot\bm{v})_{\mathcal{T}_h}+\left\langle {u}, \bm{v}\cdot \bm{n}\right\rangle_{\partial {\mathcal{T}_h}} &=0, \label{HDG_pde_a}\\
		(u_t, w)_{\mathcal{T}_h}-(\bm{q}, \nabla w)_{\mathcal{T}_h}+\left\langle {\bm{q}}\cdot \bm{n}, w\right\rangle_{\partial {\mathcal{T}_h}} &= (f,w)_{\mathcal{T}_h},  \label{HDG_pde_b}\\
		\left\langle {\bm{q}}\cdot \bm{n}, \mu \right\rangle_{\partial\mathcal{T}_h\backslash \varepsilon_h^{\partial}} &=0, \label{HDG_pde_c}\\
		\left\langle u, \mu \right\rangle_{\varepsilon_h^{\partial}} &=0, \label{HDG_pde_d}\\
		(u(\cdot,0),w)_{\mathcal{T}_h} &= (u_0,w)_{\mathcal{T}_h}, \label{HDG_pde_e}
	\end{align}
\end{subequations}
for all $(\bm{v},w,\mu)\in H(\text{div},\Omega)\times L^2(\Omega)\times  L^2(\varepsilon_h)$.

Let $\mathcal{P}^k(D)$ denote the set of polynomials of degree at most $k$ on a domain $D$.  We introduce the discontinuous finite element spaces
\begin{equation}
\begin{split}
\bm{V}_h  &:= \{\bm{v}\in [L^2(\Omega)]^d: \bm{v}|_{K}\in [\mathcal{P}^k(K)]^d, \forall K\in \mathcal{T}_h\},\\
{W}_h  &:= \{{w}\in L^2(\Omega): {w}|_{K}\in \mathcal{P}^k(K), \forall K\in \mathcal{T}_h\},\\
{M}_h  &:= \{{\mu}\in L^2(\mathcal{\varepsilon}_h): {\mu}|_{e}\in \mathcal{P}^k(e), \forall e\in \varepsilon_h, \mu|_{\varepsilon_h^{\partial}} =0\}.
\end{split}
\end{equation}
Note that $M_h$ consists of functions that are continuous inside the faces (or edges) $e\in \varepsilon_h$ and discontinuous at their borders.

The HDG method seeks an approximation  $({\bm{q}}_h,u_h,\widehat u_h)\in \bm{V}_h\times W_h\times M_h$ such that
\begin{subequations}\label{HDG_discrete2}
	\begin{align}
		(c\bm{q}_h, \bm{v})_{{\mathcal{T}_h}}- (u_h, \nabla\cdot \bm{v})_{{\mathcal{T}_h}}+\langle \widehat u_h, \bm{v}\cdot \bm{n} \rangle_{\partial{{\mathcal{T}_h}}} &=0, \label{HDG_discrete2_a}\\
		(\partial_t{u_h}, w)_{{\mathcal{T}_h}}-(\bm{q}_h, \nabla w)_{{\mathcal{T}_h}}
		+\langle\widehat{\bm{q}}_h\cdot\bm{n}, w \rangle_{\partial {{\mathcal{T}_h}}}&=(f, w)_{{\mathcal{T}_h}}, \label{HDG_discrete2_b}\\
		\langle\widehat{\bm{q}}_h\cdot \bm{n}, \mu \rangle_{\partial\mathcal{T}_{h}\backslash {{\varepsilon_h^{\partial}}}}&=0, \label{HDG_discrete2_c}\\
		(u_h(\cdot,0),w)_{\mathcal{T}_h} &= (u_0,w)_{\mathcal{T}_h}, \label{HDG_discrete2_d}
	\end{align}
	for all $(\bm{v},w,\mu)\in \bm{V}_h\times W_h\times M_h$. Here, the numerical trace $ \widehat{\bm{q}}_h $ on $\partial\mathcal{T}_h$ is defined by 
	\begin{align}
		\widehat{\bm{q}}_h &= {\bm{q}_h}+\tau(u_h-\widehat{{u}}_h)\bm{n}  \quad \mbox{on} \; \partial \mathcal{T}_h, \label{HDG_discrete2_e}
	\end{align}
\end{subequations}
where $\tau$ is  positive stabilization function defined on $\partial\mathcal{T}_h$, which we assume to be constant on each edge of a triangle or each face of a  tetrahedron.

\subsubsection{HDG Implementation: Local Solver and Time Discretization}
\label{sec:HDG_local_solver}

After some simple manipulations with \eqref{HDG_discrete2_a}-\eqref{HDG_discrete2_e}, it can be seen that 
$(\bm q_h,u_h,\widehat u_h)$ is the solution of the following weak formulation:
\begin{subequations}\label{Heat_HDG}
	\begin{align}
		(c\bm{q}_h,\bm{v})_{\mathcal{T}_h}-(u_h,\nabla\cdot \bm{v})_{\mathcal{T}_h}+\left\langle\widehat{u}_h,\bm{v\cdot n} \right\rangle_{\partial {\mathcal{T}_h}}&= 0, \label{Heat_HDG_a}\\
		(\partial_tu_h,w)_{\mathcal T_h}+(\nabla \cdot \bm{q}_h, w)_{\mathcal{T}_h}+\left\langle\tau(u_h-\widehat u_h),w\right\rangle_{\partial {\mathcal{T}_h}} &= (f,w)_{\mathcal{T}_h},\label{Heat_HDG_b}\\
		\left\langle \bm q_h\cdot \bm n +\tau(u_h-\widehat u_h), \mu\right\rangle_{\partial{\mathcal{T}_h}\backslash\varepsilon_h^{\partial}} &=0,\label{Heat_HDG_c}\\
		(u_h(\cdot,0),w)_{\mathcal{T}_h} &=(u_0,w)_{\mathcal{T}_h} ,\label{Heat_HDG_d}
	\end{align}
\end{subequations}
for all  $({\bm{v}},w,\mu)\in \bm{V}_h \times W_h\times M_h$.

Assume $\bm{V}_h = \mbox{span}\{\bm\varphi_i\}_{i=1}^{N_1}$, $W_h=\mbox{span}\{\phi_i\}_{i=1}^{N_2}$, $M_h=\mbox{span}\{\psi_i\}_{i=1}^{N_3}$, and write the unknowns as
\begin{align}\label{expre}
	&\bm{q}_h=  \sum_{j=1}^{N_1}\alpha_{j}(t)\bm\varphi_j,  \ 
	u_h = \sum_{j=1}^{N_2}\beta_{j}(t)\phi_k,    \ 
	\widehat{u}_h = \sum_{j=1}^{N_3}\gamma_{j}(t)\psi_{j}.
\end{align}
Also write the coefficient vectors as $\bm \alpha(t) = [\alpha_1(t),\ldots,\alpha_{N_1}(t)]^T$, $\bm \beta(t) = [\beta_1(t),\ldots,\beta_{N_2}(t)]^T$, and $\bm \gamma(t) = [\gamma_1(t),\ldots,\gamma_{N_3}(t)]^T$.

Substitute \eqref{expre} into \eqref{Heat_HDG_a}-\eqref{Heat_HDG_c} and use the corresponding test functions to test \eqref{Heat_HDG_a}-\eqref{Heat_HDG_c} respectively, to obtain the following matrix equation
\begin{subequations}\label{system_equation}
	\begin{align}
		\begin{bmatrix}
			0 &0&0\\
			0 &M&0\\
			0 &0&0
		\end{bmatrix}
		\left[ {\begin{array}{*{20}{c}}
				\bm{\alpha'}(t)\\
				\bm{\beta'}(t)\\
				\bm{\gamma'}(t)\\
		\end{array}} \right]
		+	 \begin{bmatrix}
			A_1 & -A_2 &A_3\\
			A_2^T & A_4 &-A_5\\
			A_3^T & A_5^T &-A_6
		\end{bmatrix}
		\left[ {\begin{array}{*{20}{c}}
				\bm{\alpha}(t)\\
				\bm{\beta}(t)\\
				\bm{\gamma}(t)
		\end{array}} \right]
		&=\left[ {\begin{array}{*{20}{c}}
				0\\
				b_1(t)\\
				0\\
		\end{array}} \right],
		\\\nonumber
		\\
		\bm\beta(0) &= \bm{\beta}_0,
	\end{align}
\end{subequations}
where
\begin{align*}
	&A_1= [(c\bm\varphi_j,\bm\varphi_i )_{\mathcal{T}_h}],\qquad A_2 = [(\phi_j,\nabla\cdot\bm{\varphi}_i)_{\mathcal{T}_h}], \qquad A_3 = [\left\langle\psi_j,{\bm\varphi_i\cdot\bm n}\right\rangle_{\partial\mathcal{T}_h}],
	\\
	&A_4=
	[\left\langle\tau\phi_j,{\phi_i}\right\rangle_{\partial\mathcal{T}_h}],\qquad A_5 = [\left\langle\tau\psi_j,{\varphi_i}\right\rangle_{\partial\mathcal{T}_h}], \qquad\;\; A_6 = [\left\langle\tau\psi_j,{\psi_i}\right\rangle_{\partial\mathcal{T}_h}] ,\\
	&M= [(\phi_j,\phi_i )_{\mathcal{T}_h}],\qquad \qquad b_1(t) = [(f,\phi_i )_{\mathcal{T}_h}],\qquad\;\;  b_2 = [(u_0,\phi_i )_{\mathcal{T}_h}],
\end{align*}
and $\bm{\beta}_0$ is determined by solving the linear system
\begin{align*}
	M\bm{\beta}_0 = b_2.
\end{align*}

A finite difference method can be applied to solve \eqref{system_equation}, but the computation is costly since we have three variables.  The local solver in the HDG method avoids this by eliminating $\bm \alpha$ and $\bm\beta$.

We now turn our attention to the implementation of the local solver in combination with a simple scheme for discretization with respect to the time variable.  Other time stepping methods can be handled similarly.  We introduce a time step $\Delta t$, the time levels $t_n= n\Delta t$, and approximations $(\bm \alpha^{n},\bm \beta^{n},\bm \gamma^{n}) $ of the exact solution $(\bm\alpha, \bm{\beta}, \bm{\gamma})$ evaluated at time $ t_n $. We apply Backward Euler method to discretize the time derivative in the system \eqref{system_equation}, which leads to the linear system at each time level:
\begin{align}\label{Heat_algebra}
	\begin{bmatrix}
		A_1 & -A_2 &A_3\\
		A_2^T & \Delta t^{-1} M+A_4 &-A_5\\
		A_3^T & A_5^T &-A_6
	\end{bmatrix}
	\left[ {\begin{array}{*{20}{c}}
			\bm{\alpha}^{n}\\
			\bm{\beta}^{n}\\
			\bm{\gamma}^{n}
	\end{array}} \right]
	=\left[ {\begin{array}{*{20}{c}}
			0\\
			b_{1,n} +\Delta t^{-1} M \bm{\beta}^{n-1} \\
			0\\
	\end{array}} \right],
\end{align} 
i.e.,
\begin{subequations}\label{algebre_full}
	\begin{align}
		A_1 \bm\alpha^{n} -A_2\bm\beta^{n} +A_3\bm\gamma^{n} &= 0,\label{algebre_full_a}\\
		A_2^T \bm\alpha^{n} + \left(\Delta t^{-1} M+A_4\right)\bm\beta^{n} -A_5\bm\gamma^{n} &= b_{1,n}+\Delta t^{-1} M \bm{\beta}^{n-1},\label{algebre_full_b}\\
		A_3^T \bm\alpha^{n} + A_5^T\bm\beta^{n} -A_6\bm\gamma^{n} &= 0.\label{algebre_full_c}
	\end{align}
\end{subequations}

The HDG local solver takes advantage of the structure induced by the discontinuous finite element spaces to eliminate $ \bm \alpha^{n} $ and $ \bm \beta^{n} $ from the above linear system.  Specifically, because of the discontinuous nature of the spaces $ \bm V_h $ and $ W_h $, the matrices $ M $, $ A_1 $, and $ A_4 $ are all block diagonal with small blocks.  Therefore, these matrices can be easily inverted.

To apply this observation, use \eqref{algebre_full_a} and \eqref{algebre_full_b} to express $\bm\alpha^n, \bm \beta^n$ in terms of $\bm{\gamma}^n$ as follows:
\begin{align*}
	\bm{\alpha}^n &= A_1^{-1}A_2 Q^{-1}\left((A_5+A_2^TA_1^{-1}A_3)\bm{\gamma}^n+ b_{1,n}+\Delta t^{-1} M\bm{\beta}^{n-1} \right)-A_1^{-1}A_3\bm{\gamma}^n\\
	&=:\tilde A_1 \bm{\gamma}^n +\tilde b_1,\\
	\bm{\beta}^n &= Q^{-1}\left((A_5+A_2^TA_1^{-1}A_3)\bm{\gamma}^n+ b_{1,n}+\Delta t^{-1} M\bm{\beta}^{n-1} \right)\\
	&=:\tilde A_2 \bm{\gamma}^n +\tilde b_2,
\end{align*}
where
$$
Q := A_2^TA_1^{-1}A_2+\Delta t^{-1} M +A_4.
$$
Then insert $\bm{\alpha}^n$ and $\bm{\beta}^n$ into \eqref{algebre_full_c} to obtain the final linear system only involving $\bm{\gamma}^{n}$:
\begin{align}
	(A_3^T \tilde A_1 + A_5^T \tilde A_2 - A_6) \bm{\gamma}^n = -A_3^T\tilde b_1 -A_5^T \tilde b_2.
\end{align}

In the above computation, we need to compute the inverses of the matrices
$A_1$ and $Q$.  As mentioned above, $ A_1 $ is block diagonal with small blocks and therefore it is easy to invert.  Furthermore, $ A_1^{-1} $ is also block diagonal with small blocks.  For the matrix $ Q $, note that $A_2$ is not block diagonal but there exist matrices $A_{21},A_{22}$ in 2D or $A_{21},A_{22}, A_{23}$ in 3D such that all matrices are block diagonal with small blocks and
\begin{align*}
	A_2 = \left[ {\begin{array}{*{20}{c}}
			A_{21}\\
			A_{22}
	\end{array}} \right] \;\text{in 2D, or}\;
	A_2 = \left[ {\begin{array}{*{20}{c}}
			A_{21}\\
			A_{22}\\
			A_{23}
	\end{array}} \right] \;\text{in 3D}.
\end{align*}
Therefore, since $ M $, $ A_1^{-1} $, and $ A_4 $ are block diagonal with small blocks, $ Q = A_2^TA_1^{-1}A_2+\Delta t^{-1} M+A_4$ is also block diagonal with small blocks and is easily inverted.

The above  process is equivalent to the local solver defined in \cite{cockburn2009unified}. For more details about the HDG method, see  
\cite{nguyen2009implicit1,nguyen2009implicit2,nguyen2010hybridizable} and the references therein.

\subsection{POD Data Approximation Problem}
\label{sec:POD}

Next, we briefly review the proper orthogonal decomposition (POD) data approximation following \cite{singler2014new,chapelle2012galerkin}.  Since we derive the HDG-POD reduced order model in continuous time, we consider a continuous time framework here; time discrete data can be handled similarly.  Let data $u\in L^2(0,T; X)$ be given, where  $X$ is a Hilbert space with inner product $(\cdot,\cdot)$ and norm $\|\cdot\|$. The POD data approximation problem for the given data $u\in  L^2(0,T; X)$ looks for an orthonormal basis $\{\varphi_i\}\subset X$ (the POD modes) minimizing the data approximation error 
\begin{align*}
	E_r = \min_{\Pi}\norm{u - \Pi u}_{L^2(0,T;X)}^2
\end{align*}
for the data approximation 
\begin{align*}
	\Pi u = \sum_{i=1}^r (u(t),\varphi_i) \varphi_i.
\end{align*}

To solve this data approximation problem, introduce the linear operator $K: L^2(0,T)\to X$ defined by 
\begin{align*}
	K w = \int_0^T u(t) w(t) dt.
\end{align*}
It can be shown that $K$ is a compact linear operator, and therefore it has a singular value decomposition (SVD): there exist singular values $\{\sigma_i\}$ and singular vectors $\{f_i\}\subset L^2(0,T)$ and $\{\varphi_i\}\subset X$ such that 
\begin{align*}
	K f_i = \sigma_i \varphi_i,\qquad K^* \varphi_i = \sigma_i f_i,
\end{align*}
where the Hilbert adjoint operator $K^*:X\to L^2(0,T)$ is given by
\begin{align*}
	[K^*x](t) = (x, u(t)).
\end{align*}
We assume throughout that the singular values are ordered so that $\sigma_1\ge\sigma_2\ge\cdots\ge 0$.  The squares of the singular values, $\lambda_i = \sigma_i^2$, are called the POD eigenvalues of the data.  It can be shown that the POD modes $\{\varphi_i\}\subset X$ are the above singular vectors of $K$.

Define the linear operator $\mathcal R = K{K}^*:X\to X$ by
\begin{align*}
	\mathcal R x= \int_0^T (x,u(t))u(t) dt.
\end{align*}
The nonzero eigenvalues of $\mathcal R$ are the nonzero POD eigenvalues and the corresponding eigenvectors of $\mathcal R$ are the POD modes, i.e., the singular vectors $\{\varphi_i\}\subset X$.  Moreover, if $X$ is finite dimensional, then there are only finitely many nonzero POD singular values.

The singular vectors are orthonormal bases for each space. Furthermore, $K$ is known to be a Hilbert-Schmidt operator, and so the sum of the squares of the singular values is finite:
\begin{align*}
	\sum_{i=1}^{\infty} \sigma_i^2 < \infty.
\end{align*}
The POD data approximation error is given by the sum of the squares of the neglected singular values, i.e.,
\begin{align}\label{POD_pro_error}
	E_r = \min_{\Pi}\norm{u - \Pi u}_{L^2(0,T;X)}^2 = \sum_{i>r} \sigma_i^2 = \sum_{i>r} \lambda_i.
\end{align} 

\subsection{HDG-POD Reduced Order Model}
\label{sec:HDG-POD_details}

Next, we derive the HDG-POD reduced order model (ROM).  We first consider POD data approximation to construct low order function spaces, then we give a basic formulation of the ROM, and then describe the implementation.

\subsubsection{Data Approximation and POD Modes}

The HDG method produces the flux $\bm q_h(t) $, the scalar variable $u_h(t)$, and the numerical trace $\widehat u_h(t)$.  We consider this data as either
\begin{enumerate}
	\item  the exact solution data for the continuous time HDG formulation \eqref{HDG_discrete2_a}-\eqref{HDG_discrete2_e}, or
	\item  a continuous time extension of the exact solution data for a time discretized HDG formulation.
\end{enumerate}
The first case is useful for the error analysis of the continuous time HDG-POD ROM that we perform in \Cref{sec:error_analysis}.  The second case is useful for the actual computation.  We note there are many ways to extend time discrete data to continuous time data.  The simplest approach is to extend to a piecewise constant function in time.  

For the data above, define the following three different linear POD operators
\begin{align*}
	K^{\bm q}: L^2(0,T)\to [L^2(\Omega)]^d, \quad K^{\bm q}  w_1 &= \int_0^T \bm q_h(t) w_1(t) dt,\\
	K^{u}: L^2(0,T)\to L^2(\Omega), \quad K^u  w_2 &= \int_0^T u_h(t)w_2(t) dt,\\
	K^{\widehat u}: L^2(0,T)\to L^2(\varepsilon_h^o), \quad K^{\widehat u}  w_3 &= \int_0^T \widehat u_h(t)w_3(t) dt.
\end{align*}
The POD data approximation theory applies for each operator.  Since $ \bm q_h $, $u_h$ and $\widehat{u}_h$ are each expressed in terms of a finite basis, these operators each have finite rank and therefore only have finitely many nonzero singular values.

We let $\{\sigma_j^{\bm q}, f_j^{\bm q},\varphi_j^{\bm q}\}$, $\{\sigma_j^{u}, f_j^{u},\varphi_j^{u}\}$, and $\{\sigma_j^{\widehat u}, f_j^{\widehat u},\varphi_j^{\widehat u}\}$ denote the singular values and singular vectors of $K^{\bm q}$, $K^{u}$, and $K^{\widehat u}$, respectively. Since the POD modes $\varphi_j^{\bm q}, \varphi_j^{u}$, and $\varphi_j^{\widehat u}$ are in  $\bm V_h $, $ W_h$, and $M_h$, respectively, there exists coefficients $\left[D^{i,j}_1\right]_{i,j=1}^{N_1,r_1}$, $\left[D^{i,j}_2\right]_{i,j=1}^{N_2,r_2}$, and $\left[D^{i,j}_3\right]_{i,j=1}^{N_3,r_3}$ such that
\begin{equation}\label{Heat_POD_modes}
\begin{split}
\varphi_j^{\bm q} = \sum_{i=1}^{N_1} D_1^{i,j} \bm \varphi_i,\quad j=1,2,\ldots,r_1,\\
\varphi_j^{u} = \sum_{i=1}^{N_2} D_2^{i,j}  \phi_i,\quad j=1,2,\ldots,r_2,\\
\varphi_j^{\widehat u} = \sum_{i=1}^{N_3} D_3^{i,j}  \psi_i,\quad j=1,2,\ldots,r_3.
\end{split}
\end{equation}

Define the spaces of POD modes for $\bm q_h$, $u_h$ and $\widehat u_h$ by $\bm V_h^{r_1} = \text{span} \{\varphi_i^{\bm q}\}_{i=1}^{r_1}$,  $ W_h^{r_2} = \text{span} \{\varphi_i^{u}\}_{i=1}^{r_2}$, and  $ M_h^{r_3} = \text{span} \{\varphi_i^{\widehat u}\}_{i=1}^{r_3}$, respectively.

Computing the three necessary singular value decompositions can be done efficiently using incremental approaches; see, e.g., \cite{Brand06,BakerGallivan12,IwenOng16,Oxberry17,FareedShenSinglerZhang17} and the references therein.

\subsubsection{HDG-POD Reduced Order Model: Basic Formulation}
Next, we present the HDG-POD reduced order model. In \Cref{sec:HDG_local_solver}, we saw the local solver is one of the main advantages of HDG methods since it makes the global degrees of freedom significant smaller compared to other DG methods. However, this creates the main difficulty to devising an HDG-POD reduced model since we don't have an ordinary dynamical system for $\widehat u_h$ in the HDG method.

To derive the HDG-POD reduced order model, we perform the following procedure.  First, we consider the continuous time HDG formulation \eqref{Heat_HDG} without any elimination of variables, and project this system onto the POD spaces $\bm V_h^{r_1} $, $ W_h^{r_2} $, and $ M_h^{r_3}$ to obtain an initial reduced order model.  Next, we eliminate the reduced flux and numerical trace in the reduced order model to further reduce the order.  Finally, if desired, we recover the flux using a simple relationship with $\mathcal O(r)$ computational cost, where $r$ is the order of reduced model.  We present details of the above procedure below, and also show the HDG-POD ROM is well-posed.

We emphasize that the global variable for HDG-POD is the scalar variable, not the numerical trace; this is totally different from HDG methods. 

Projecting the continuous time HDG system (\ref{Heat_HDG}) onto the POD spaces $\bm V_h^{r_1}\times W_h^{r_2}\times M_h^{r_3}$ gives the initial HDG-POD reduced order model.  Specifically, we seek $(\bm q_r,u_r,\widehat u_r) \in \bm V_h^{r_1}\times W_h^{r_2}\times M_h^{r_3}$ satisfying
\begin{subequations}\label{Heat}
	\begin{align}
		(c\bm{q}_r,\bm{v})_{\mathcal{T}_h}-(u_r,\nabla\cdot \bm{v})_{\mathcal{T}_h}+\left\langle\widehat{u}_r,\bm{v\cdot n} \right\rangle_{\partial {\mathcal{T}_h}} &= 0 \label{Heat_a}\\
		(\partial_tu_r,w)_{\mathcal T_h}-(\bm{q}_r,\nabla w)_{\mathcal{T}_h}+\left\langle\bm q_r\cdot \bm n +\tau(u_r-\widehat u_r),w\right\rangle_{\partial {\mathcal{T}_h}} &= (f,w)_{\mathcal{T}_h}\label{Heat_b}\\
		\left\langle \bm q_r\cdot \bm n +\tau(u_r-\widehat u_r), \mu\right\rangle_{\partial{\mathcal{T}_h}\backslash\varepsilon_h^{\partial}} &=0,\label{Heat_c}\\
		u_r(\cdot,0) &= u_{r,0},
	\end{align}
\end{subequations}
for all $\bm{v}$ in $\bm{V}_h^{r_1}$, $w\in W_h^{r_2}$, and $\mu\in M_h^{r_3}$, and $u_{r,0}\in W_h^{r_2}$ is the initial condition. Integration by parts in \eqref{Heat_b} gives
\begin{subequations}\label{HDG_POD}
	\begin{align}
		(c\bm{q}_r,\bm{v})_{\mathcal{T}_h}-(u_r,\nabla\cdot \bm{v})_{\mathcal{T}_h}+\left\langle\widehat{u}_r,\bm{v\cdot n} \right\rangle_{\partial {\mathcal{T}_h}}&= 0, \label{HDG_POD_a}\\
		(\partial_tu_r,w)_{\mathcal T_h}+(\nabla \cdot \bm{q}_r, w)_{\mathcal{T}_h}+\left\langle\tau(u_r-\widehat u_r),w\right\rangle_{\partial {\mathcal{T}_h}} &= (f,w)_{\mathcal{T}_h},\label{HDG_POD_b}\\
		\left\langle \bm q_r\cdot \bm n +\tau(u_r-\widehat u_r), \mu\right\rangle_{\partial{\mathcal{T}_h}\backslash\varepsilon_h^{\partial}} &=0,\label{HDG_POD_c}\\
		u_r(\cdot,0) &=u_{r,0}.
	\end{align}
\end{subequations}

The system \eqref{HDG_POD} is a reduced model but not our final reduced order model since it has three variables $\bm q_r $, $u_r$, and $\widehat u_r$.  We utilize \eqref{HDG_POD_a} and \eqref{HDG_POD_c} to eliminate the reduced flux and numerical trace as follows: let $(\bm q_r^{u_r}, \widehat u_r^{u_r}) \in \bm{V}_h^{r_1} \times M_h^{r_3} $ be the unique solution of 
\begin{subequations}\label{reduced_tec}
	\begin{align}
		(c\bm{q}_r^{u_r},\bm{v})_{\mathcal{T}_h}+\left\langle\widehat{u}_r^{u_r},\bm{v\cdot n} \right\rangle_{\partial {\mathcal{T}_h}}&= (u_r,\nabla\cdot \bm{v})_{\mathcal{T}_h},\label{local_eli_a}\\
		\left\langle -\bm q_r^{u_r}\cdot \bm n + \tau\widehat u_r^{u_r}, \mu\right\rangle_{\partial{\mathcal{T}_h}\backslash \varepsilon_h^\partial} &=\left\langle \tau u_r,  \mu\right\rangle_{\partial{\mathcal{T}_h}\backslash \varepsilon_h^\partial},\label{local_eli_b}
	\end{align}
\end{subequations}
for all $(\bm v, \mu) \in \bm{V}_h^{r_1} \times M_h^{r_3}$.  The system \eqref{reduced_tec} is uniquely solvable since the bilinear form on the left hand side is positive definite on $\bm{V}_h^{r_1} \times M_h^{r_3}$. In general, for $ w \in W_h^{r_2} $ we define $(\bm q_r^w, \widehat u_r^w)$ to be the unique solution of 
\begin{subequations}
	\begin{align}
		(c\bm{q}_r^w,\bm{v})_{\mathcal{T}_h}+\left\langle\widehat{u}_r^w,\bm{v\cdot n} \right\rangle_{\partial {\mathcal{T}_h}}&= (w,\nabla\cdot \bm{v})_{\mathcal{T}_h},\label{local_eli_c}\\
		\left\langle -\bm q_r^w\cdot \bm n +\tau\widehat u_r^w, \mu\right\rangle_{\partial{\mathcal{T}_h}\backslash \varepsilon_h^\partial} &=\left\langle \tau w,  \mu\right\rangle_{\partial{\mathcal{T}_h}\backslash \varepsilon_h^\partial}.\label{local_eli_d}
	\end{align}
\end{subequations}

This allows us to rewrite the reduced order model.
\begin{theorem}
	The HDG-POD reduced order model \eqref{HDG_POD} can be rewritten as follows: find $ u_r \in W_h^{r_2} $ satisfying
	\begin{subequations}\label{HDG-POD_ODE}
		\begin{align}
			(\partial_tu_r,w)_{\mathcal T_h}+a_r(u_r,w) &= \ell_r (w),\\
			u_r(\cdot,0) &=u_{r,0},
		\end{align}
	\end{subequations}
	for all $ w \in W_h^{r_2} $.  Here, the forms are given by
	\begin{align}
		a_r(u_r,w) & = (c\bm q_r^{u_r},\bm q_r^w)_{\mathcal T_h} +\left\langle \tau (u_r-\widehat u_r^{u_r}), w-\widehat u_r^{w}\right\rangle_{\partial \mathcal T_h},\label{bilin_a}\\
		\ell_r (w) & = (f,w)_{\mathcal T_h},\label{bilin_b}
	\end{align}
	for $u_r,w \in W_h^{r_2}$.  Also, the bilinear form $a_r$ defined on $W_h^{r_2}$ is symmetric and semipositive definite.
\end{theorem}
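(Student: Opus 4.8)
The plan is to show that the three-field reduced system \eqref{HDG_POD} collapses to the single-field system \eqref{HDG-POD_ODE} by recognizing that the local solver \eqref{local_eli_c}--\eqref{local_eli_d} reconstructs the reduced flux and trace directly from $u_r$, and then to read off symmetry and semidefiniteness from the resulting formula \eqref{bilin_a}.

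First I would identify the flux and trace. Rearranging \eqref{HDG_POD_a} as $(c\bm{q}_r,\bm{v})_{\mathcal{T}_h}+\langle\widehat{u}_r,\bm{v\cdot n}\rangle_{\partial \mathcal{T}_h}=(u_r,\nabla\cdot\bm v)_{\mathcal{T}_h}$ and rearranging \eqref{HDG_POD_c} as $\langle -\bm q_r\cdot\bm n+\tau\widehat u_r,\mu\rangle_{\partial\mathcal{T}_h\backslash\varepsilon_h^\partial}=\langle\tau u_r,\mu\rangle_{\partial\mathcal{T}_h\backslash\varepsilon_h^\partial}$, one sees that the pair $(\bm q_r,\widehat u_r)$ satisfies exactly the local solver equations \eqref{reduced_tec}, i.e.\ \eqref{local_eli_c}--\eqref{local_eli_d} evaluated at $w=u_r$. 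Since the bilinear form on the left of the local solver is positive definite on $\bm V_h^{r_1}\times M_h^{r_3}$, the solution is unique, so $\bm q_r=\bm q_r^{u_r}$ and $\widehat u_r=\widehat u_r^{u_r}$.

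Second, I would substitute these identities into the scalar equation \eqref{HDG_POD_b}, so that proving \eqref{HDG-POD_ODE} reduces to verifying
\[
(\nabla\cdot\bm q_r^{u_r},w)_{\mathcal T_h}+\langle\tau(u_r-\widehat u_r^{u_r}),w\rangle_{\partial\mathcal T_h}=a_r(u_r,w).
\]
To produce the volume term of \eqref{bilin_a}, I would test the local solver equation \eqref{local_eli_c} for the generic function $w$ against $\bm v=\bm q_r^{u_r}$, which (using the symmetry of $(c\,\cdot,\cdot)_{\mathcal T_h}$, valid since $c=a^{-1}$ is scalar) gives
\[
(\nabla\cdot\bm q_r^{u_r},w)_{\mathcal T_h}=(c\bm q_r^{u_r},\bm q_r^{w})_{\mathcal T_h}+\langle\bm q_r^{u_r}\cdot\bm n,\widehat u_r^{w}\rangle_{\partial\mathcal T_h}.
\]
It then remains to match the stabilization terms, i.e.\ to show $\langle\bm q_r^{u_r}\cdot\bm n,\widehat u_r^{w}\rangle_{\partial\mathcal T_h}+\langle\tau(u_r-\widehat u_r^{u_r}),\widehat u_r^{w}\rangle_{\partial\mathcal T_h}=0$; this identity is the main obstacle. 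The key is the interplay between $\partial\mathcal T_h$ and $\partial\mathcal T_h\backslash\varepsilon_h^\partial$: because every function in $M_h^{r_3}$ vanishes on the boundary faces $\varepsilon_h^\partial$, the factor $\widehat u_r^{w}$ annihilates all boundary-face contributions, so both inner products may be restricted from $\partial\mathcal T_h$ to $\partial\mathcal T_h\backslash\varepsilon_h^\partial$ without change. On the restricted set the required cancellation is precisely \eqref{local_eli_b} (that is, \eqref{local_eli_d} at $w=u_r$) tested against $\mu=\widehat u_r^{w}\in M_h^{r_3}$, which yields $\langle\bm q_r^{u_r}\cdot\bm n,\widehat u_r^{w}\rangle_{\partial\mathcal T_h\backslash\varepsilon_h^\partial}=-\langle\tau(u_r-\widehat u_r^{u_r}),\widehat u_r^{w}\rangle_{\partial\mathcal T_h\backslash\varepsilon_h^\partial}$ and closes the computation.

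Finally, symmetry and semidefiniteness follow at once from \eqref{bilin_a} once the local solver map $w\mapsto(\bm q_r^{w},\widehat u_r^{w})$ is noted to be linear (its defining equations have right-hand sides linear in $w$). Both $(c\bm q_r^{v},\bm q_r^{w})_{\mathcal T_h}$ and $\langle\tau(v-\widehat u_r^{v}),w-\widehat u_r^{w}\rangle_{\partial\mathcal T_h}$ are manifestly symmetric in $v,w$, so $a_r$ is symmetric; and taking $v=w$ gives $a_r(w,w)=(c\bm q_r^{w},\bm q_r^{w})_{\mathcal T_h}+\langle\tau(w-\widehat u_r^{w}),w-\widehat u_r^{w}\rangle_{\partial\mathcal T_h}\ge 0$, since $c=a^{-1}\ge c_0>0$ and $\tau>0$. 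Thus $a_r$ is symmetric and semipositive definite.
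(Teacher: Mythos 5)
Your proposal is correct and follows essentially the same route as the paper's proof: identify $(\bm q_r,\widehat u_r)=(\bm q_r^{u_r},\widehat u_r^{u_r})$, then test \eqref{local_eli_c} with $\bm v=\bm q_r^{u_r}$ and \eqref{local_eli_b} with $\mu=\widehat u_r^{w}$ (using that functions in $M_h^{r_3}$ vanish on $\varepsilon_h^{\partial}$) to convert the divergence term into the two terms of $a_r$. If anything, you are slightly more careful than the paper, which asserts the identification with the local solver without invoking uniqueness and states symmetry/semidefiniteness without the explicit linearity argument you give.
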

\begin{proof}
	First, from \eqref{HDG_POD_b} we have
	\begin{align*}
		(f,w)_{\mathcal{T}_h} =	(\partial_tu_r,w)_{\mathcal T_h}+(\nabla \cdot \bm{q}_r, w)_{\mathcal{T}_h}+\left\langle\tau(u_r-\widehat u_r),w\right\rangle_{\partial {\mathcal{T}_h}}.
	\end{align*}
	Here $\bm q_r $ and $\widehat u_r$ is the solution of \eqref{reduced_tec}, then we have 
	\begin{align*}
		(f,w)_{\mathcal{T}_h} = (\partial_tu_r,w)_{\mathcal T_h}+(\nabla \cdot \bm{q}_r^{u_r}, w)_{\mathcal{T}_h}+\left\langle\tau(u_r-\widehat u_r^{u_r}),w\right\rangle_{\partial {\mathcal{T}_h}}.
	\end{align*}
	Choose  $\bm v = \bm{q}_r^{u_r}$ in \eqref{local_eli_c} to obtain
	\begin{align*}
		(c\bm{q}_r^w,\bm{q}_r^{u_r})_{\mathcal{T}_h}+\left\langle\widehat{u}_r^w,\bm{q}_r^{u_r}\cdot \bm n \right\rangle_{\partial {\mathcal{T}_h}}= (w,\nabla\cdot \bm{q}_r^{u_r})_{\mathcal{T}_h}.
	\end{align*}
	Similarly, choose $\mu = \widehat{u}_r^w$ in  \eqref{local_eli_b} and recall that $\mu =0$ on $\varepsilon_h^\partial$ to get
	\begin{align*}
		\left\langle \bm q_r^{u_r}\cdot \bm n , \widehat{u}_r^w\right\rangle_{\partial{\mathcal{T}_h}} =\left\langle \tau(\widehat u_r^{u_r} - u_r),  \widehat{u}_r^w\right\rangle_{\partial{\mathcal{T}_h}}.
	\end{align*}
	So, we have
	\begin{align*}
		\hspace{1em}&\hspace{-1em}(f,w)_{\mathcal{T}_h}\\
		 &=	(\partial_tu_r,w)_{\mathcal T_h}+(\nabla \cdot \bm{q}_r, w)_{\mathcal{T}_h}+\left\langle\tau(u_r-\widehat u_r),w\right\rangle_{\partial {\mathcal{T}_h}} \\
		& = (\partial_tu_r,w)_{\mathcal T_h}+(\nabla \cdot \bm{q}_r^{u_r}, w)_{\mathcal{T}_h}+\left\langle\tau(u_r-\widehat u_r^{u_r}),w\right\rangle_{\partial {\mathcal{T}_h}} \\
		&= (\partial_tu_r,w)_{\mathcal T_h} + (c\bm{q}_r^w,\bm{q}_r^{u_r})_{\mathcal{T}_h}+\left\langle\widehat{u}_r^w,{\bm{q}_r^{u_r}} \cdot \bm n \right\rangle_{\partial {\mathcal{T}_h}}+\left\langle\tau(u_r-\widehat u_r^{u_r}),w\right\rangle_{\partial {\mathcal{T}_h}} \\
		&= (\partial_tu_r,w)_{\mathcal T_h} + (c\bm{q}_r^w,\bm{q}_r^{u_r})_{\mathcal{T}_h}-\left\langle  \tau(u_r -\widehat u_r^{u_r}),\widehat{u}_r^w \right\rangle_{\partial{\mathcal{T}_h}} +\left\langle\tau(u_r-\widehat u_r^{u_r}),w\right\rangle_{\partial {\mathcal{T}_h}} \\
		&= (\partial_tu_r,w)_{\mathcal T_h} + (c\bm{q}_r^w,\bm{q}_r^{u_r})_{\mathcal{T}_h} +\left\langle\tau(u_r-\widehat u_r^{u_r}),w - \widehat u_r^{w}\right\rangle_{\partial{\mathcal{T}_h}}.
	\end{align*}
	
	Next, it is clear that $ a_r $ is symmetric and semipositive definite on $ W_h^{r_2} $.
\end{proof}

\begin{remark}
	Equation \eqref{HDG-POD_ODE} is our final HDG-POD reduced model; the only variable is the reduced scalar variable.
\end{remark}

\subsubsection{HDG-POD Implementation}

Assume 
\begin{align}\label{HDG_POD_expre}
	&\bm q_{r}=  \sum_{j=1}^{r_1} a_j(t)\varphi_{j}^{\bm q},  \quad 
	u_r = \sum_{j=1}^{r_2}b_{j}(t)\varphi_{j}^{u},  \quad 
	\widehat{u}_r = \sum_{j=1}^{r_3}c_{j}(t)\varphi_{j}^{\widehat u}.
\end{align}
Substitute \eqref{HDG_POD_expre} into the HDG-POD reduced order model with all variables \eqref{HDG_POD_a}-\eqref{HDG_POD_c} and use the corresponding  test functions to test  \eqref{HDG_POD_a}-\eqref{HDG_POD_c}, respectively, to obtain the following finite dimensional dynamical system:
\begin{align}\label{Matrix_HDG_POD}
	\bm M_r \bm \dot{\bm x}_r +\bm A_r \bm x_r &= \bm b_r, \\
	\bm x_r(0)& = \bm x_0,
\end{align}
where
\begin{align*}
	\bm x_r(t) &=[a_1(t),\ldots,a_{r_1}(t),b_1(t),\ldots,b_{r_2}(t),c_1(t),\ldots,c_{r_3}(t)]^T,\\
	\bm x_0 &= [(u_{r,0},\varphi_1^u)_{\mathcal T_h},(u_{r,0},\varphi_2^u)_{\mathcal T_h},\ldots,(u_{r,0},\varphi_{r_2}^u)_{\mathcal T_h}]^T,
\end{align*}
and
\begin{equation}\label{system_equation_HDG_POD}
\begin{split}
\bm M_r &=\begin{bmatrix}
D_1 &0&0\\
0 & D_2&0\\
0 &0&D_3
\end{bmatrix}^{T}
\begin{bmatrix}
0 &0&0\\
0 &M&0\\
0 &0&0
\end{bmatrix}
\begin{bmatrix}
D_1 &0&0\\
0 & D_2&0\\
0 &0&D_3
\end{bmatrix}
=  \begin{bmatrix}
0 &0&0\\
0 &{I}_{r_2}&0\\
0 &0&0
\end{bmatrix},\\
\bm A_r &=\begin{bmatrix}
D_1 &0&0\\
0 & D_2&0\\
0 &0& D_3
\end{bmatrix}^{T}
\begin{bmatrix}
A_1 &-A_2&A_3\\
A_2^T&A_4&-A_5\\
A_3^T &A_5^T&-A_6
\end{bmatrix}
\begin{bmatrix}
D_1 &0&0\\
0 & D_2&0\\
0 &0& D_3
\end{bmatrix}
\\
&=  \begin{bmatrix}
B_1 &-B_2& B_3\\
B_2^T & B_4&- B_5\\
B_3^T & B_5^T&-B_6
\end{bmatrix},\\
\bm b_r &=
\begin{bmatrix}
D_1 &0&0\\
0 & D_2&0\\
0 &0& D_3
\end{bmatrix}^{T}
\left[ {\begin{array}{*{20}{c}}
	0\\
	b(t)\\
	0\\
	\end{array}} \right]
=\left[ {\begin{array}{*{20}{c}}
	0\\
	D_2^T b(t)\\
	0\\
	\end{array}} \right]
=\left[ {\begin{array}{*{20}{c}}
	0\\
	z(t)\\
	0\\
	\end{array}} \right].
\end{split}
\end{equation}
Here, $z$ and $\{B_j\}_{j=1}^6$ are determined by \eqref{system_equation_HDG_POD}, and $I_{r}$ denotes the identity matrix of order $r$.

Since $\{\varphi_j^{\bm q}\}_{j=1}^{r_1} $, $\{\varphi_i^{u}\}_{j=1}^{r_2}$, and $\{\varphi_i^{\widehat u}\}_{j=1}^{r_3}$ are orthonormal bases in $\bm V_h^{r_1} $, $ W_h^{r_2}$, and $M_h^{r_3}$ respectively, from equation (\ref{Heat_POD_modes}) we have 
\begin{equation}\label{ded}
\begin{split}
I_{r_1} &= \left[(\varphi_j^{\bm q},\varphi_i^{\bm q} )_{\mathcal T_h}\right]_{i,j=1}^{r_1} = \left[\left(\sum_{\ell=1}^{N_1} D_1^{\ell,j} \bm \varphi_{\ell},\sum_{k=1}^{N_1} D_1^{k,j}\bm \varphi_k \right)_{\mathcal T_h}\right]_{i,k=1}^{r_1} = D_1^TA_7D_1,\\
I_{r_2} &= \left[(\varphi_j^{u},\varphi_i^{u} )_{\mathcal T_h}\right]_{i,j=1}^{r_2} = \left[\left(\sum_{\ell=1}^{N_2} D_2^{\ell,j}  \phi_{\ell},\sum_{k=1}^{N_2} D_2^{k,j} \phi_k \right)_{\mathcal T_h}\right]_{i,k=1}^{r_2} = D_2^TMD_2,\\
I_{r_3} &= \left[\left\langle\varphi_j^{\widehat u},\varphi_i^{\widehat u}\right\rangle_{\partial\mathcal T_h}\right]_{i,j=1}^{r_3} = \left[\left\langle\sum_{\ell=1}^{N_3} D_3^{\ell,j}  \psi_{\ell},\sum_{k=1}^{N_3} D_3^{k,j} \psi_k \right\rangle_{\partial \mathcal T_h}\right]_{i,k=1}^{r_3} = D_3^T A_8 D_3,\\
\end{split}
\end{equation}
where $A_7= [(\bm\varphi_j,\bm\varphi_i )_{\mathcal{T}_h}] $ and $ A_8 = [\left\langle \psi_j,{\psi_i}\right\rangle_{\partial\mathcal{T}_h}]$.  To obtain a reduced order model in terms of the reduced scalar coefficients below, we show $ B_1 $ and $ B_6 $ are positive definite and therefore invertible:  For $\tau_* = \min_{K\in\partial \mathcal T_h}{\tau}>0$ and $c>c_0$, we have 
\begin{align*}
	B_1 = D_1^TA_1D_1 \ge c_0 D_1^TA_7D_1 = c_0 I_{r_1}>0,\\
	B_6 = D_3^TA_6D_3 \ge \tau_* D_3^TA_8D_3 = \tau_* I_{r_3}>0.
\end{align*}


Let $\bm a = [a_1(t),\ldots,a_{r_1}(t)]^T $, $\bm b = [b_1(t),\ldots,b_{r_2}(t)]^T$, and $\bm c =[c_1(t),\ldots,c_{r_3}(t)]$.  The system \eqref{Matrix_HDG_POD} can be rewritten as
\begin{subequations}\label{algebra_HDG_POD}
	\begin{align}
		B_1 \bm a- B_2\bm b + B_3\bm c &= 0,\label{algebra_HDG_POD_1}\\
		\dot{\bm b} +  B_2^T\bm a+ B_4\bm b - B_5\bm c&= z,\label{algebra_HDG_POD_2}\\
		B_3^T\bm a+  B_5^T\bm b -B_6\bm c &= 0.\label{algebra_HDG_POD_3}
	\end{align}
\end{subequations}
Since $ B_1 $ and $ B_6 $ are invertible, we can solve \eqref{algebra_HDG_POD_1} and \eqref{algebra_HDG_POD_3}.  We obtain
\begin{subequations}
	\begin{align}
		\bm a = \left(B_1^{-1}B_2 - B_1^{-1}B_3(B_6+B_3^TB_1^{-1}B_3)^{-1}(B_5^T+B_3^TB_1^{-1}B_2)\right)\bm{b} &=: G\bm{b},\label{algebra_HDG_POD_4}\\
		\bm c = \left(B_6+B_3^TB_1^{-1}B_3\right)^{-1} (B_5^T + B_3^T B_1^{-1}B_2) \bm{b} &=: H\bm b.\label{algebra_HDG_POD_5}
	\end{align}
\end{subequations}
Substitute \eqref{algebra_HDG_POD_4} and \eqref{algebra_HDG_POD_5} into \eqref{algebra_HDG_POD_2}  to obtain the system for $\bm b$ only:
\begin{align}\label{algebra_HDG_POD_6}
	\dot{\bm b} +  \left( B_2^T G +B_4 - B_5H\right) \bm{b}= z.
\end{align}
\begin{remark}
	Equation \eqref{algebra_HDG_POD_6} is the matrix form of the final reduced order model \eqref{HDG-POD_ODE} written in terms of the scalar unknown $ u_r $ only.  Also, in \eqref{algebra_HDG_POD_4}  and \eqref{algebra_HDG_POD_5} we can invert the matrix $B_6+B_3^TB_1^{-1}B_3$ since $B_1$ and $B_6$ are positive definite; furthermore, this is computationally cheap since we only compute this inverse once and the dimension of the matrix is low.  Finally, after we solve \eqref{algebra_HDG_POD_6}, we can recover the reduced flux coefficients $\bm a$ from  \eqref{algebra_HDG_POD_4} with computational cost $\mathcal O(r_1)$.
\end{remark}

It is obvious the system \eqref{algebra_HDG_POD_6} has a unique solution $\bm{b}$, and also $\bm a$ and $\bm c$ are uniquely determined by  \eqref{algebra_HDG_POD_4} and \eqref{algebra_HDG_POD_5}.  This proves the well-posedness of the reduced order model \eqref{HDG-POD_ODE} for $ u_r $ given earlier:
\begin{theorem}
	For any $ u_{r,0} \in W_h^{r_2} $ and any $ T > 0 $, there exists a unique solution $ u_r $ of the reduced order model \eqref{HDG-POD_ODE} on the time interval $ (0,T) $.
\end{theorem}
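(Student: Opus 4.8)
The plan is to reduce the well-posedness of the (continuous-in-time) system \eqref{HDG-POD_ODE} to a standard existence–uniqueness statement for a finite-dimensional linear system of ODEs, exploiting the matrix formulation already assembled above. First I would expand $u_r$ in the POD basis $\{\varphi_j^u\}_{j=1}^{r_2}$ with coefficient vector $\bm b$, so that \eqref{HDG-POD_ODE} is equivalent to the block system \eqref{Matrix_HDG_POD}, in which the scalar block of the mass matrix $\bm M_r$ is exactly the identity $I_{r_2}$. The companion flux and trace coefficients $\bm a$ and $\bm c$ enter only algebraically, through the constraints \eqref{algebra_HDG_POD_1} and \eqref{algebra_HDG_POD_3}, with no time derivatives.

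Second, I would use the positive definiteness of $B_1$ and $B_6$ established just above the statement, which yields invertibility of $B_1$ and of $B_6 + B_3^T B_1^{-1} B_3$, to solve the algebraic constraints for $\bm a$ and $\bm c$ in terms of $\bm b$ via \eqref{algebra_HDG_POD_4} and \eqref{algebra_HDG_POD_5}. Substituting these expressions into \eqref{algebra_HDG_POD_2} eliminates $\bm a$ and $\bm c$ and produces the closed linear system \eqref{algebra_HDG_POD_6}, i.e. $\dot{\bm b} + (B_2^T G + B_4 - B_5 H)\bm b = z$ with $\bm b(0) = \bm x_0$. The coefficient matrix is constant in time, and the forcing $z(t) = D_2^T b(t)$ lies in $L^2(0,T)$, since $f \in L^2(0,T;L^2(\Omega))$ makes each entry $(f,\phi_i)_{\mathcal T_h}$ square integrable in $t$.

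Third, I would invoke the classical theory of linear ODE systems: for a constant coefficient matrix and a locally integrable (here $L^2$) forcing, the variation-of-parameters formula furnishes a unique absolutely continuous solution $\bm b$ on the entire interval $[0,T]$, for every $T>0$ and every initial vector $\bm x_0$. Setting $u_r = \sum_{j} b_j\,\varphi_j^u$ and, if desired, recovering $\bm q_r$ and $\widehat u_r$ from $\bm a$ and $\bm c$, then gives the unique solution of \eqref{HDG-POD_ODE}.

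Since every step after the reduction is standard, I do not expect a genuine obstacle; the substantive work was already carried out in assembling \eqref{algebra_HDG_POD_6}. The one point that must not be skipped is the invertibility underpinning the elimination: that $B_1$ is positive definite (from $c \ge c_0$), and that $B_6 + B_3^T B_1^{-1} B_3$ is positive definite (from $B_6 \ge \tau_* I_{r_3}$ together with the positive semidefiniteness of $B_3^T B_1^{-1} B_3$). Once that invertibility is in hand, the conclusion is a one-line appeal to linear ODE theory.
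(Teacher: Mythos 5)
Your proposal is correct and follows essentially the same route as the paper: expand in the POD bases, use the positive definiteness of $B_1$ and $B_6$ (hence invertibility of $B_6 + B_3^T B_1^{-1} B_3$) to eliminate the algebraic variables $\bm a$ and $\bm c$, and conclude well-posedness from the resulting linear ODE \eqref{algebra_HDG_POD_6}. The only difference is that you spell out what the paper dismisses as ``obvious''---the variation-of-parameters argument for a constant-coefficient linear system with $L^2$ forcing---which is a welcome addition rather than a deviation.
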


\section{Error analysis}
\label{sec:error_analysis}

Next, we perform an error analysis of the HDG-POD reduced order model.  We assume the solution data of the continuous time HDG formulation \eqref{HDG_discrete2} is used to generate the POD spaces and the HDG-POD reduced order model, and we bound the error between the solution of the continuous time HDG-POD reduced model and the continuous time HDG formulation.  As mentioned in the introduction, this type of error analysis has been performed for many standard and modified POD reduced order models of many different PDEs.  For more discussion about this type of analysis, see, e.g., \cite[Section 3.6]{kunisch2001galerkin} and \cite[Section 1]{kunisch2002galerkin}.

In this first work on HDG-POD model order reduction, we focus on the convergence analysis in the continuous time case to emphasize the errors caused by the HDG-POD spatial discretization only.

We begin the analysis by discussing POD projection operators and their error estimates in \Cref{sec:POD_projection_errors}.  Then we use energy arguments in \Cref{sec:energy_argument} to obtain the error bounds. 

\subsection{POD Projection Errors}
\label{sec:POD_projection_errors}

First, we introduce the broken Sobolev spaces $ H^1(\mathcal{T}_h) $ and $ H(\text{div};\mathcal{T}_h) $:
\begin{align*}
	H^1(\mathcal T_h) &= \{v\in L^2(\Omega): \forall K\in \mathcal T_h, v|_K \in H^1(K)\},\\
	H(\text{div};\mathcal T_h) &= \{ \bm v \in [L^2(\Omega)]^d: \forall K\in \mathcal T_h, \nabla \cdot \bm v |_K \in L^2(K)\}.
\end{align*}
On these spaces, we have the broken gradient and divergence seminorms, respectively:
\begin{align*}
	\norm{\nabla v}_{\mathcal T_h} = \left( \sum_{K\in \mathcal T_h} \norm{\nabla v}_{L^2(K)}^2\right)^{1/2},  \quad  \norm{\nabla \cdot \bm v}_{\mathcal T_h} = \left( \sum_{K\in \mathcal T_h} \norm{\nabla \cdot \bm v}_{L^2(K)}^2\right)^{1/2}.
\end{align*}

The three linear POD operators $K^{\bm q}$, $K^{u}$, and $K^{\widehat u}$ define the following three orthogonal POD projection operators $\bm \Pi: \bm V_h\to \bm V_h^{r_1}$, $ \Pi: W_h\to  W_h^{r_2}$ and $P_M: M_h\to M_h^{r_3}$ that satisfy
\begin{equation}\label{projection}
\begin{split}
(\bm \Pi \bm v_h,\varphi_i^{\bm q})_{\mathcal T_h} &=  (\bm v_h,\varphi_i^{\bm q})_{\mathcal T_h},\; i=1,2\ldots,r_1,\\ 
(\Pi  w_h,\varphi_i^{u})_{\mathcal T_h} &=  (w_h,\varphi_i^{u})_{\mathcal T_h},\; i=1,2\ldots,r_2,\\ 
\left\langle  P_M \mu_h,\varphi_i^{\widehat u}\right\rangle_{\partial\mathcal T_h} &= \left\langle  \mu_h,\varphi_i^{\widehat u}\right\rangle_{\partial\mathcal T_h},\; i=1,2\ldots,r_3,\\
\end{split}
\end{equation}
for all $ (\bm v_h, w_h, \mu_h ) \in ( \bm V_h, W_h, M_h ) $.  Furthermore, we have the following projection errors
\begin{equation}\label{projection_error}
\begin{split}
&\int_0^T \norm{\bm q_h - \bm\Pi \bm q_h}_{\mathcal T_h}^2 dt = \sum_{i>r_1} {\lambda_i^{\bm q}}<\infty,\\
&\int_0^T \norm{u_h - \Pi u_h}_{\mathcal T_h}^2 = \sum_{i>r_2}{\lambda_i^{u}}<\infty,\\
&\int_0^T \norm{\widehat u_h - P_M \widehat u_h}_{\partial \mathcal T_h}^2 dt= \sum_{i>r_3}{\lambda_i^{\widehat u}}<\infty.
\end{split}
\end{equation}

In the POD projection errors in \eqref{projection_error} above, $ L^2 $ norms are used in space.  Below, we give the POD projection error formulas in various seminorms.  POD projection error formulas using a different norm was first established for continuous time data in \cite{singler2014new}, and then for time discrete data in \cite{iliescu2014variational}.  These POD projection error formulas allow us to avoid using inverse inequalities with the POD spaces; it is well known that the constants in these inequalities blow up quickly as $ r $ increases.  Using the following result enables us to prove that the HDG-POD error bounds converge to zero as $ r $ increases.  See \cite{singler2014new,iliescu2014variational} for more information.
\begin{lemma}\label{POD_error_H1}
	The POD projection errors for $u_h$ and $ \bm q_h $ satisfy
	\begin{align*}
		\int_0^T \| \nabla (u_h - \Pi u_h) \|_{\mathcal T_h}^2 dt &= \sum_{i>r_2} \lambda_i^u \|\nabla \varphi_i^u\|_{\mathcal T_h}^2,\\
		\int_0^T \| u_h - \Pi u_h \|_{\partial \mathcal T_h}^2 dt &= \sum_{i>r_2} \lambda_i^u \|\varphi_i^u\|_{\partial \mathcal T_h}^2,\\
		\int_0^T \| \nabla \cdot (\bm q_h - \bm \Pi \bm q_h ) \|_{\mathcal T_h}^2 dt &= \sum_{i>r_1} \lambda_i^{\bm q} \| \nabla \cdot \varphi_i^{\bm q} \|_{\mathcal T_h}^2,\\
		\int_0^T \| ( \bm q_h - \bm \Pi \bm q_h ) \cdot \bm n \|_{\partial \mathcal T_h}^2 dt &= \sum_{i>r_1} \lambda_i^{\bm q} \|\varphi_i^{\bm q} \cdot \bm n\|_{\partial \mathcal T_h}^2.
	\end{align*}
\end{lemma}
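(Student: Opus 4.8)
The plan is to reduce each of the four identities to the singular value decomposition of the corresponding POD operator. I focus on the scalar case and the first identity; the other three follow from the same argument with a different linear map. Because $u_h \in L^2(0,T;W_h)$ and $W_h$ is finite dimensional, the operator $K^u$ has finite rank, so only finitely many singular values $\sigma_i^u$ are nonzero and every series below is in fact a finite sum. Using the adjoint relation $[(K^u)^*\varphi_i^u](t) = (u_h(t),\varphi_i^u)$ together with the SVD identity $(K^u)^*\varphi_i^u = \sigma_i^u f_i^u$, the coefficient of $u_h(t)$ along $\varphi_i^u$ is exactly $\sigma_i^u f_i^u(t)$; moreover the POD error formula \eqref{POD_pro_error} taken at $r=0$ gives $\int_0^T\|u_h\|_{\mathcal T_h}^2\,dt = \sum_i (\sigma_i^u)^2$, which forces $u_h(t)$ to have no component orthogonal to $\mathrm{span}\{\varphi_i^u\}$ for a.e.\ $t$. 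This yields the fundamental POD expansion
\[
u_h(t) = \sum_i \sigma_i^u\, f_i^u(t)\, \varphi_i^u \qquad \text{for a.e. } t \in (0,T).
\]

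First I would use the defining property \eqref{projection} of the orthogonal projection $\Pi$ onto $W_h^{r_2}$ and the orthonormality of $\{\varphi_i^u\}$ to write $\Pi u_h(t) = \sum_{i\le r_2}\sigma_i^u f_i^u(t)\varphi_i^u$, so that $u_h(t)-\Pi u_h(t) = \sum_{i>r_2}\sigma_i^u f_i^u(t)\varphi_i^u$. Applying the broken gradient termwise (legitimate since the sum is finite) gives $\nabla(u_h-\Pi u_h)(t) = \sum_{i>r_2}\sigma_i^u f_i^u(t)\,\nabla\varphi_i^u$. Expanding the squared seminorm as a double sum, integrating in $t$, and invoking the orthonormality $\int_0^T f_i^u f_j^u\,dt = \delta_{ij}$ of the temporal singular vectors in $L^2(0,T)$ collapses the double sum to its diagonal:
\[
\int_0^T \|\nabla(u_h-\Pi u_h)\|_{\mathcal T_h}^2\,dt = \sum_{i>r_2}(\sigma_i^u)^2\|\nabla\varphi_i^u\|_{\mathcal T_h}^2 = \sum_{i>r_2}\lambda_i^u\|\nabla\varphi_i^u\|_{\mathcal T_h}^2,
\]
which is the first identity, since $\lambda_i^u=(\sigma_i^u)^2$.

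The remaining three identities follow from exactly these three steps with the linear map $\nabla$ replaced by, respectively, the boundary trace (second identity, again using the expansion of $u_h$ and the projection $\Pi$), the broken divergence, and the normal boundary trace (third and fourth identities, using instead the flux expansion $\bm q_h(t)=\sum_i \sigma_i^{\bm q} f_i^{\bm q}(t)\varphi_i^{\bm q}$ and the projection $\bm\Pi$). In every case the only properties used are linearity of the map, orthonormality of the temporal singular vectors in $L^2(0,T)$, and $\lambda_i=\sigma_i^2$.

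The one point that genuinely requires care is the termwise application of the (unbounded) differential operators and traces to the POD series, and the interchange of summation with the time integral. Here this is immediate because the finite dimensionality of $W_h$ and $\bm V_h$ forces the series to terminate, so no convergence issue arises. For general infinite-dimensional data this interchange is precisely the crux of the argument, and I would cite \cite{singler2014new,iliescu2014variational} for the general statement rather than reprove it.
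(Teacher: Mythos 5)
Your proof is correct and takes essentially the same route as the paper's: both use the finite rank of the POD operators to write $u_h-\Pi u_h$ as a finite tail expansion in the POD modes, apply the broken differential or trace operator termwise, and collapse the resulting double sum through the SVD structure. The only difference is cosmetic --- you write the coefficients as $\sigma_i^u f_i^u(t)$ and use temporal orthonormality $\int_0^T f_i^u f_j^u\,dt=\delta_{ij}$, whereas the paper keeps the coefficients as $(u_h,\varphi_i^u)_{\mathcal T_h}$ and collapses the sum via the eigenvalue relation $\mathcal R^u\varphi_j^u=\lambda_j^u\varphi_j^u$ together with spatial orthonormality of the modes, which are equivalent uses of the same SVD identity.
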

\begin{proof}
	We prove the first identity; the proofs of the remaining identities are similar.
	
	Since the space $ W_h $ is finite dimensional, the number of nonzero POD singular values for $ u_h $ is finite.  Also, each POD mode corresponding to a nonzero singular value is in $ W_h $.  The POD projection error formula \ref{projection_error} gives
	$$
	  u_h = \sum_{i=1}^{s_u} (u_h,\varphi_i^u)_{\mathcal T_h} \varphi_i^u,
	$$
	where $ s_u $ is the number of nonzero POD singular values for $ u_h $.  Therefore,
	$$
	  u_h - \Pi u_h = \sum_{i=r_2+1}^{s_u} (u_h,\varphi_i^u)_{\mathcal T_h} \varphi_i^u.
	$$
	The proof of the error formula is now similar to the time discrete data case in \cite{iliescu2014variational}.  We have
	\begin{align*}
		\int_0^T \| \nabla ( u_h - \Pi u_h ) \|_{\mathcal T_h}^2 dt &= \int_0^T \sum_{K\in \mathcal T_h}\norm {\sum_{i>r_2} (u_h,\varphi_i^u)_{\mathcal T_h} \nabla\varphi_i^u}_{L^2(K)}^2 dt\\
		&= \int_0^T \left( \sum_{i>r_2}(u_h,\varphi_i^u)_{\mathcal T_h}\nabla \varphi_i^u, \sum_{j>r_2}(u_h,\varphi_j^u)_{\mathcal T_h}\nabla\varphi_j^u \right)_{\mathcal T_h}dt\\
		&= \int_0^T \sum_{i>r_2}  \sum_{j>r_2}  (u_h,\varphi_i^u)_{\mathcal T_h}(u_h,\varphi_j^u)_{\mathcal T_h}(\nabla\varphi_i^u, \nabla\varphi_j^u)_{\mathcal T_h}dt\\
		&= \sum_{i>r_2}  \sum_{j>r_2} (\nabla\varphi_i^u, \nabla\varphi_j^u)_{\mathcal T_h}\left(\int_0^T (u_h,\varphi_j^u)_{\mathcal T_h}u_h dt,\varphi_i^u\right)_{\mathcal T_h}\\
		&= \sum_{i>r_2}  \sum_{j>r_2} (\nabla\varphi_i^u, \nabla\varphi_j^u)_{\mathcal T_h}(\mathcal R^u \varphi_j^u,\varphi_i^u)_{\mathcal T_h}\\
		&= \sum_{i>r_2} \lambda_i^u \| \nabla \varphi_i^u \|_{\mathcal T_h}^2,
	\end{align*}
	where we have used $\mathcal R^u \varphi_j^u = \lambda_j^u \varphi_j^u$ and the fact that the POD modes are orthonormal in the inner product $(\cdot,\cdot)_{\mathcal T_h}$.
\end{proof}

\subsection{Error Analysis by Energy Estimates}
\label{sec:energy_argument}

Next, we compare the solution of the HDG-POD reduced order model to the POD projections of the HDG solution data.  Some of our analysis relies on techniques used to analyze the continuous time HDG formulation for the heat equation in \cite{chabaud2012uniform}.  However, we also use different methods due to the POD projections.

We begin by recalling a trace inequality that can be found in \cite{riviere2008discontinuous}.
\begin{lemma}\label{trace_ine}
	Let $v\in \mathcal P^k(K)$.  There exists $C>0$, independent of $K\in\mathcal T_h$ and $v$, but dependent on the polynomial degree $k$, such that 
	\begin{align}\label{tra}
		\norm{v}_{\partial K} \le C h_K^{-1/2} \norm{v}_K,
	\end{align}
	where $h_K$ is the diameter of $K$.
\end{lemma}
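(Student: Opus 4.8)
The plan is to prove the trace inequality by the standard scaling argument to a reference element. Fix a reference simplex $\hat K$ (the unit triangle in 2D or the unit tetrahedron in 3D). Since $\mathcal P^k(\hat K)$ is finite dimensional, all norms on it are equivalent, so there is a constant $\hat C > 0$ depending only on $k$ and $\hat K$ with $\norm{\hat v}_{\partial \hat K} \le \hat C \norm{\hat v}_{\hat K}$ for every $\hat v \in \mathcal P^k(\hat K)$. The remaining task is to transfer this estimate from $\hat K$ to an arbitrary element $K$ of the mesh and to keep track of how the volume and boundary $L^2$ norms scale.

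Concretely, I would write the affine map $F_K(\hat x) = B_K \hat x + b_K$ carrying $\hat K$ onto $K$ and set $\hat v = v \circ F_K$, which lies in $\mathcal P^k(\hat K)$ because $F_K$ is affine and hence preserves the polynomial degree. The volume change of variables gives $\norm{v}_K^2 = |\det B_K| \, \norm{\hat v}_{\hat K}^2$, and the change of variables on each face of $\partial K$ expresses $\norm{v}_{\partial K}^2$ in terms of $\norm{\hat v}_{\partial \hat K}^2$ with a surface Jacobian factor. Under the standard shape-regularity assumption on the family $\mathcal T_h$, the quantity $|\det B_K|$ is comparable to $h_K^d$ and the surface factor is comparable to $h_K^{d-1}$, uniformly in $K$. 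Substituting the reference estimate then yields
\begin{align*}
	\norm{v}_{\partial K}^2 \le C_1 h_K^{d-1} \norm{\hat v}_{\partial \hat K}^2 \le C_1 \hat C^2 h_K^{d-1} \norm{\hat v}_{\hat K}^2 \le C^2 h_K^{d-1} h_K^{-d} \norm{v}_K^2 = C^2 h_K^{-1} \norm{v}_K^2,
\end{align*}
and taking square roots gives the claimed bound \eqref{tra}.

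I expect the main obstacle to be the careful bookkeeping of the surface measure under the affine change of variables: the pullback of the area element on a face $e = F_K(\hat e)$ is not $|\det B_K|$ but a face-dependent factor, and bounding this factor above and below by a uniform constant times $h_K^{d-1}$ is exactly where the shape-regularity hypothesis enters and is what makes $C$ independent of $K$. Once the two scalings are established with the correct powers $h_K^{d-1}$ and $h_K^{d}$, the factor $h_K^{-1/2}$ emerges automatically from their mismatch and the remainder of the argument is routine; alternatively, one may cite \cite{riviere2008discontinuous} directly as the statement already indicates.
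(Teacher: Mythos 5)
The paper does not actually prove this lemma: it is recalled with a citation to \cite{riviere2008discontinuous}, and the accompanying remark only quotes explicit constants for simplices. Your scaling argument is the standard self-contained proof of this discrete trace inequality, and it is essentially correct, so you have supplied a proof where the paper offers only a reference. Two points deserve care. First, $\norm{\cdot}_{\partial \hat K}$ is only a \emph{seminorm} on $\mathcal P^k(\hat K)$ (a polynomial such as an interior bubble can vanish on $\partial \hat K$ without vanishing on $\hat K$), so ``all norms on it are equivalent'' is not quite the right invocation; what you need, and what is true, is that on a finite-dimensional space every seminorm is dominated by a constant times any norm, which yields $\norm{\hat v}_{\partial \hat K}\le \hat C \norm{\hat v}_{\hat K}$. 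Second, you correctly identify shape regularity as the point where uniformity in $K$ enters: the face Jacobians are bounded by $C h_K^{d-1}$ for any simplex, but the lower bound $\left|\det B_K\right| = |K|/|\hat K| \ge c\, h_K^{d}$ fails for degenerate (sliver) elements, so the constant in \eqref{tra} is uniform only over a shape-regular family of meshes --- a hypothesis the paper leaves implicit. Finally, note what your route does not buy: the compactness/finite-dimensionality argument gives existence of $C$ but no explicit value, whereas the sharp constants $C=\sqrt{(k+1)(k+2)/2}$ in 2D and $C=\sqrt{(k+1)(k+3)/3}$ in 3D quoted in the paper's remark come from a finer, explicit computation on simplices (due to Warburton and Hesthaven) and cannot be recovered from norm-equivalence reasoning.
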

\begin{remark}
	In the case where $K\in\mathcal T_h$ is a triangle in 2D, or a tetrahedron in 3D, exact expressions are known for the constant $C$ in \Cref{trace_ine} as a function of the polynomial degree $k$:
	\begin{align*}
		C = \sqrt{\frac{(k+1)(k+2)}{2}} \;\text{in} \;2D \quad \text{and} \quad C = \sqrt{\frac{(k+1)(k+3)}{3}} \;\text{in} \;3D.
	\end{align*}
\end{remark}

We split the errors using the POD projections, and begin by obtaining equations for the components of the errors residing in the POD subspaces.
\begin{lemma}
	Let $\varepsilon_r^{\bm q}  = \bm\Pi \bm q_h - \bm q_r$, $\varepsilon_r^{u}  = \Pi u_h - u_r$,  and $\varepsilon_r^{\widehat u}  = P_M u_h - \widehat u_r$. Then
	\begin{subequations}\label{POD_error}
		\begin{align}
			\hspace{2em}&\hspace{-2em}(c\varepsilon_r^{\bm q} ,\bm{v})_{\mathcal{T}_h}-(\varepsilon_r^u,\nabla\cdot \bm{v})_{\mathcal{T}_h}+\left\langle\varepsilon_r^{\widehat{u}},\bm{v\cdot n} \right\rangle_{\partial {\mathcal{T}_h}}\nonumber\\
			&= (c\bm\Pi \bm{q}_h -c \bm{q}_h ,\bm{v})_{\mathcal{T}_h}
			+(u_h - \Pi u_h,\nabla\cdot \bm{v})_{\mathcal{T}_h} + \left\langle P_M \widehat{u}_h -\widehat  u_h,\bm{v\cdot n} \right\rangle_{\partial {\mathcal{T}_h}}, \label{POD_error_a}\\
			\hspace{2em}&\hspace{-2em} (\partial_t\varepsilon^u_r,w)_{\mathcal T_h}-(\varepsilon_r^{\bm q} ,\nabla w)_{\mathcal{T}_h}+\left\langle\varepsilon_r^{\bm q}\cdot \bm{n} + \tau (\varepsilon_r^u-\varepsilon_r^{\widehat u}),w\right\rangle_{\partial {\mathcal{T}_h}}\nonumber\\
			&= (\bm q_h - \bm\Pi\bm{q}_h,\nabla w)_{\mathcal{T}_h} + \left\langle (\bm\Pi {\bm{q}}_h - \bm q_h)\cdot \bm{n},w\right\rangle_{\partial {\mathcal{T}_h}}+\left\langle\tau (\Pi u_h-u_h),w\right\rangle_{\partial \mathcal T_h}\nonumber\\
			& \quad - \left\langle\tau (P_M \widehat u_h - \widehat u_h),w\right\rangle_{\partial \mathcal T_h},\label{POD_error_b}\\
			\hspace{2em}&\hspace{-2em} \left\langle\varepsilon_r^{\bm q}\cdot \bm{n} + \tau (\varepsilon_r^u-\varepsilon_r^{\widehat u}),\mu\right\rangle_{\partial {\mathcal{T}_h}}\nonumber\\
			&= \left\langle (\bm \Pi\bm q_h -\bm q_h)\cdot \bm n,\mu\right\rangle_{\partial{\mathcal{T}_h}} +\left\langle \tau(\Pi u_h -u_h), \mu\right\rangle_{\partial{\mathcal{T}_h}},\label{POD_error_c}\\
			\varepsilon_r^u(0) &= \Pi u_h(\cdot,0) - u_{r,0},
		\end{align}
	\end{subequations}
	for all $\bm{v} \in \bm{V}_h^{r_1}$, $w\in W_h^{r_2}$, and $\mu\in M_h^{r_3}$.
\end{lemma}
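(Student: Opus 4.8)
The plan is to obtain each identity in \eqref{POD_error} by a Galerkin-orthogonality argument: subtract the reduced equations \eqref{Heat_a}--\eqref{Heat_c} from the HDG equations \eqref{HDG_discrete2_a}--\eqref{HDG_discrete2_e} (using the matching, non-integrated-by-parts forms so the boundary terms line up), and then split each true error into a POD projection error plus its component in the POD subspace. Since the HDG system holds for all test functions in $\bm V_h\times W_h\times M_h$, it holds in particular for $(\bm v,w,\mu)\in\bm V_h^{r_1}\times W_h^{r_2}\times M_h^{r_3}$, which is exactly where the reduced model is posed. After subtracting I would write $\bm q_h-\bm q_r=(\bm q_h-\bm\Pi\bm q_h)+\varepsilon_r^{\bm q}$, $u_h-u_r=(u_h-\Pi u_h)+\varepsilon_r^u$, and $\widehat u_h-\widehat u_r=(\widehat u_h-P_M\widehat u_h)+\varepsilon_r^{\widehat u}$, keep the three $\varepsilon_r$-terms on the left, and move the projection-error terms to the right.

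For the first equation \eqref{POD_error_a} this is the entire argument: no term cancels, and tracking signs (for instance $-(c(\bm q_h-\bm\Pi\bm q_h),\bm v)_{\mathcal T_h}=(c\bm\Pi\bm q_h-c\bm q_h,\bm v)_{\mathcal T_h}$) reproduces the stated right-hand side. The second equation \eqref{POD_error_b} is where the defining property of the orthogonal projection $\Pi$ is essential. The time-derivative contribution splits as $(\partial_t(u_h-u_r),w)_{\mathcal T_h}=(\partial_t(u_h-\Pi u_h),w)_{\mathcal T_h}+(\partial_t\varepsilon_r^u,w)_{\mathcal T_h}$, and since $\Pi$ is the $L^2(\mathcal T_h)$-orthogonal projection onto $W_h^{r_2}$ by \eqref{projection}, we have $(u_h-\Pi u_h,w)_{\mathcal T_h}=0$ for every $t$ and every $w\in W_h^{r_2}$; differentiating in time (using that $\Pi$ is fixed, so $\partial_t\Pi u_h=\Pi\partial_t u_h$) annihilates the first term and leaves precisely $(\partial_t\varepsilon_r^u,w)_{\mathcal T_h}$. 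Splitting the remaining flux, scalar, and trace differences and moving projection errors to the right then yields \eqref{POD_error_b}; note the $\tau$-weighted trace term $\left\langle\tau(P_M\widehat u_h-\widehat u_h),w\right\rangle_{\partial\mathcal T_h}$ genuinely survives here, because $w$ lies in $W_h^{r_2}$ and the orthogonality of $P_M$ (which is against $M_h^{r_3}$) cannot be applied.

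For the third equation \eqref{POD_error_c} I would first note that every $\mu\in M_h^{r_3}\subset M_h$ vanishes on $\varepsilon_h^\partial$, so the pairings over $\partial\mathcal T_h\setminus\varepsilon_h^\partial$ in \eqref{Heat_c} agree with pairings over the full skeleton $\partial\mathcal T_h$; this lets me write the subtracted conservation equation on all of $\partial\mathcal T_h$ before splitting. The step I expect to scrutinize most — the main obstacle — is the disappearance of the $\tau$-weighted trace projection error $\left\langle\tau(\widehat u_h-P_M\widehat u_h),\mu\right\rangle_{\partial\mathcal T_h}$, which must vanish for the stated right-hand side to hold. I would establish this from the $\langle\cdot,\cdot\rangle_{\partial\mathcal T_h}$-orthogonality of $P_M$ in \eqref{projection}, which applies once one checks that $\tau\mu$ remains in $M_h^{r_3}$; this is the delicate interaction between the piecewise-constant stabilization $\tau$ and the POD trace space (immediate when $\tau$ is constant, and otherwise the point requiring care). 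Finally, the initial condition $\varepsilon_r^u(0)=\Pi u_h(\cdot,0)-u_{r,0}$ follows directly from the definition $\varepsilon_r^u=\Pi u_h-u_r$ together with $u_r(\cdot,0)=u_{r,0}$.
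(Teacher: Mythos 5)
Your proposal is correct and follows essentially the same route as the paper: test the HDG system with POD test functions, subtract the reduced-order equations, split each error via the POD projections, and invoke the orthogonality of $\Pi$ (to remove the time-derivative projection error) and of $P_M$ (to remove the $\tau$-weighted trace projection error in the third equation). The only differences are presentational---the paper projects first and then subtracts, while you subtract first and then split---and you are in fact more explicit than the paper about the one delicate step, namely that annihilating $\left\langle\tau(\widehat u_h - P_M\widehat u_h),\mu\right\rangle_{\partial\mathcal T_h}$ requires $\tau\mu\in M_h^{r_3}$ (automatic for constant $\tau$), a point the paper subsumes under ``by the definition of the POD projections.''
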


\begin{proof}
	Noting that $\mu= 0$ on $\varepsilon_h^{\partial}$, the HDG solution $\bm q_h $, $u_h$, and $\widehat u_h$ satisfies
	\begin{equation*}
		\begin{split}
			(c\bm{q}_h,\bm{v})_{\mathcal{T}_h}-(u_h,\nabla\cdot \bm{v})_{\mathcal{T}_h}+\left\langle\widehat{u}_h,\bm{v\cdot n} \right\rangle_{\partial {\mathcal{T}_h}}&= 0, \\
			(\partial_tu_h,w)_{\mathcal T_h}-(\bm{q}_h, \nabla w)_{\mathcal{T}_h}+\left\langle \bm q_h\cdot \bm n,w\right\rangle_{\partial {\mathcal{T}_h}}+\left\langle\tau(u_h-\widehat u_h),w\right\rangle_{\partial {\mathcal{T}_h}} &= (f,w)_{\mathcal{T}_h},\\
			\left\langle \bm q_h\cdot \bm n +\tau(u_h-\widehat u_h), \mu\right\rangle_{\partial{\mathcal{T}_h}} &=0,
		\end{split}
	\end{equation*}
	for all $\bm{v} \in \bm{V}_h^{r_1}$, $w\in W_h^{r_2}$, $\mu\in M_h^{r_3}$. By the definition of the POD projections, we obtain 
	\begin{align*}
		\hspace{2em}&\hspace{-2em} (c\bm\Pi \bm{q}_h,\bm{v})_{\mathcal{T}_h}-(\Pi u_h,\nabla\cdot \bm{v})_{\mathcal{T}_h}+\left\langle P_M \widehat{u}_h,\bm{v\cdot n} \right\rangle_{\partial {\mathcal{T}_h}}\\
		&= 
		(c\bm\Pi \bm{q}_h -c \bm{q}_h ,\bm{v})_{\mathcal{T}_h} + (u_h - \Pi u_h,\nabla\cdot \bm{v})_{\mathcal{T}_h} + \left\langle P_M \widehat{u}_h -\widehat u_h,\bm{v\cdot n} \right\rangle_{\partial {\mathcal{T}_h}},\\
		\hspace{2em}&\hspace{-2em} (\partial_t\Pi u_h,w)_{\mathcal T_h}-(\bm\Pi\bm{q}_h,\nabla w)_{\mathcal{T}_h}+\left\langle \bm\Pi {\bm{q}}_h\cdot \bm{n},w\right\rangle_{\partial {\mathcal{T}_h}}+\left\langle\tau \Pi u_h,w\right\rangle_{\partial \mathcal T_h} \\
		&- \left\langle\tau P_M \widehat u_h,w\right\rangle_{\partial \mathcal T_h}= (f,w)_{\mathcal{T}_h}+(\bm q_h - \bm\Pi\bm{q}_h,\nabla w)_{\mathcal{T}_h} + \left\langle (\bm\Pi {\bm{q}}_h - \bm q_h)\cdot \bm{n},w\right\rangle_{\partial {\mathcal{T}_h}}\\
		&+\left\langle\tau (\Pi u_h-u_h),w\right\rangle_{\partial \mathcal T_h}- \left\langle\tau (P_M \widehat u_h - \widehat u_h),w\right\rangle_{\partial \mathcal T_h},\\
		\hspace{2em}&\hspace{-2em} \left\langle \bm \Pi\bm q_h\cdot \bm n +\tau(\Pi u_h-P_M\widehat u_h), \mu\right\rangle_{\partial{\mathcal{T}_h}}\\
		&= \left\langle (\bm \Pi\bm q_h -\bm q_h)\cdot \bm n +\tau(\Pi u_h -u_h), \mu\right\rangle_{\partial{\mathcal{T}_h}}.
	\end{align*}
	Subtracting \eqref{Heat_HDG_a}-\eqref{Heat_HDG_c} from the above three equations gives the error equations.
\end{proof}

Next, we obtain an expression for the energy norm of the errors.
\begin{lemma}
	We have 
	\begin{align*}
		\hspace{2em}&\hspace{-2em} (\partial_t\varepsilon^u_r,\varepsilon^u_r)_{\mathcal T_h}+( c\varepsilon_r^{\bm q},\varepsilon_r^{\bm q})_{\mathcal T_h} +  \left\langle\tau( \varepsilon_r^{u} - \varepsilon_r^{\widehat u}),  \varepsilon_r^{u} - \varepsilon_r^{\widehat u} \right\rangle_{\partial {\mathcal{T}_h}}\\
		&= (c\bm\Pi \bm{q}_h -c \bm{q}_h , \varepsilon_r^{\bm q})_{\mathcal{T}_h} +\left\langle (\bm\Pi {\bm{q}_h} - \bm q_h)\cdot \bm{n}, \varepsilon_r^u - \varepsilon_r^{\widehat u} \right\rangle_{\partial {\mathcal{T}_h}} 
		\\
		&\quad + \left\langle \tau(\Pi {{u}_h} - u_h), \varepsilon_r^u - \varepsilon_r^{\widehat u}  \right\rangle_{\partial {\mathcal{T}_h}} - \left\langle \tau(P_M {\widehat{u}_h} - \widehat u_h), \varepsilon_r^u \right\rangle_{\partial {\mathcal{T}_h}}\\
		&\quad+\left\langle  u_h - \Pi u_h,  \varepsilon_r^{\bm q} \cdot \bm n\right\rangle_{\partial \mathcal T_h}
		- 	(\nabla (u_h - \Pi u_h), \varepsilon_r^{\bm q})_{\mathcal{T}_h}+ \left\langle P_M \widehat{u}_h -\widehat u_h,\varepsilon_r^{\bm q} \cdot\bm n \right\rangle_{\partial {\mathcal{T}_h}} \\
		& \quad +\left\langle (\bm q_h - \bm\Pi\bm{q}_h) \cdot \bm n, \varepsilon_r^u\right\rangle_{\partial\mathcal{T}_h} 
		-  (\nabla \cdot(\bm q_h - \bm\Pi\bm{q}_h), \varepsilon_r^u)_{\mathcal{T}_h}.
	\end{align*}
\end{lemma}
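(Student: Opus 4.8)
The plan is to test the three error equations \eqref{POD_error_a}--\eqref{POD_error_c} with the error functions themselves and then form a single linear combination that isolates the symmetric energy form. Specifically, I would set $\bm v = \varepsilon_r^{\bm q}$ in \eqref{POD_error_a}, $w = \varepsilon_r^u$ in \eqref{POD_error_b}, and $\mu = \varepsilon_r^{\widehat u}$ in \eqref{POD_error_c}; these are admissible test functions because $\varepsilon_r^{\bm q}\in \bm V_h^{r_1}$, $\varepsilon_r^u \in W_h^{r_2}$, and $\varepsilon_r^{\widehat u}\in M_h^{r_3}$, the last of which vanishes on $\varepsilon_h^{\partial}$. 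I would then compute \eqref{POD_error_a} $+$ \eqref{POD_error_b} $-$ \eqref{POD_error_c}, which is the combination designed so that the three energy terms $(c\varepsilon_r^{\bm q},\varepsilon_r^{\bm q})_{\mathcal T_h}$, $(\partial_t \varepsilon_r^u, \varepsilon_r^u)_{\mathcal T_h}$, and $\left\langle \tau(\varepsilon_r^u - \varepsilon_r^{\widehat u}), \varepsilon_r^u - \varepsilon_r^{\widehat u}\right\rangle_{\partial \mathcal T_h}$ survive on the left.

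The key algebraic step on the left-hand side is an element-by-element integration by parts. Using the broken identity $(\varepsilon_r^u, \nabla\cdot \varepsilon_r^{\bm q})_{\mathcal T_h} = -(\nabla \varepsilon_r^u, \varepsilon_r^{\bm q})_{\mathcal T_h} + \left\langle \varepsilon_r^u, \varepsilon_r^{\bm q}\cdot \bm n\right\rangle_{\partial \mathcal T_h}$, the volume term $-(\varepsilon_r^u, \nabla\cdot \varepsilon_r^{\bm q})_{\mathcal T_h}$ coming from \eqref{POD_error_a} cancels the term $-(\varepsilon_r^{\bm q}, \nabla \varepsilon_r^u)_{\mathcal T_h}$ coming from \eqref{POD_error_b}. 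For the boundary contributions carrying the factor $\varepsilon_r^{\bm q}\cdot \bm n$, I would collect the three pieces $\left\langle \varepsilon_r^{\widehat u}, \varepsilon_r^{\bm q}\cdot \bm n\right\rangle$ (from \eqref{POD_error_a}), $-\left\langle \varepsilon_r^u, \varepsilon_r^{\bm q}\cdot \bm n\right\rangle$ (produced by the integration by parts), and $\left\langle \varepsilon_r^{\bm q}\cdot \bm n, \varepsilon_r^u - \varepsilon_r^{\widehat u}\right\rangle$ (from the boundary term of \eqref{POD_error_b} minus \eqref{POD_error_c}); these sum to zero, leaving precisely the claimed energy terms.

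For the right-hand side I would apply the same broken integration by parts to the two remaining volume terms. The term $(u_h - \Pi u_h, \nabla\cdot \varepsilon_r^{\bm q})_{\mathcal T_h}$ on the right of \eqref{POD_error_a} becomes $-(\nabla(u_h - \Pi u_h), \varepsilon_r^{\bm q})_{\mathcal T_h} + \left\langle u_h - \Pi u_h, \varepsilon_r^{\bm q}\cdot \bm n\right\rangle_{\partial \mathcal T_h}$, and the term $(\bm q_h - \bm\Pi \bm q_h, \nabla \varepsilon_r^u)_{\mathcal T_h}$ on the right of \eqref{POD_error_b} becomes $-(\nabla\cdot(\bm q_h - \bm\Pi \bm q_h), \varepsilon_r^u)_{\mathcal T_h} + \left\langle (\bm q_h - \bm\Pi \bm q_h)\cdot \bm n, \varepsilon_r^u\right\rangle_{\partial \mathcal T_h}$. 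These manipulations are legitimate since $u_h - \Pi u_h \in W_h$ and $\bm q_h - \bm\Pi \bm q_h \in \bm V_h$ are piecewise polynomial, so the broken gradient and divergence are well defined element-wise. Gathering every resulting volume and boundary contribution from the right sides of \eqref{POD_error_a}, \eqref{POD_error_b}, and the negative of \eqref{POD_error_c} then reproduces, term by term, the stated right-hand side.

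The computation is essentially bookkeeping, so I expect no deep obstacle; the only point requiring care is tracking signs and correctly distributing the boundary factor $\varepsilon_r^{\bm q}\cdot \bm n$ across the three equations so that the mixed left-hand-side boundary terms cancel exactly. The mildly delicate part is recognizing at the outset that the asymmetric combination \eqref{POD_error_a} $+$ \eqref{POD_error_b} $-$ \eqref{POD_error_c}, rather than a plain sum, is what produces the symmetric energy form with the trace difference $\varepsilon_r^u - \varepsilon_r^{\widehat u}$ appearing in both slots of the boundary inner product.
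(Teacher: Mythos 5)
Your proposal is correct and follows essentially the same route as the paper: the paper tests with $\bm v = \varepsilon_r^{\bm q}$, $w = \varepsilon_r^u$, and $\mu = -\varepsilon_r^{\widehat u}$ and adds the equations, which is identical to your combination \eqref{POD_error_a} $+$ \eqref{POD_error_b} $-$ \eqref{POD_error_c} with $\mu = \varepsilon_r^{\widehat u}$, and it then performs the same broken integration by parts to collapse the left-hand side into the energy form $\Theta_r = \left\langle \tau(\varepsilon_r^u-\varepsilon_r^{\widehat u}), \varepsilon_r^u-\varepsilon_r^{\widehat u}\right\rangle_{\partial\mathcal T_h}$ and to rewrite the two right-hand-side volume terms as boundary-plus-volume contributions. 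Your cancellation bookkeeping for the $\varepsilon_r^{\bm q}\cdot\bm n$ boundary terms is exactly the implicit step in the paper's computation, so there is no gap.
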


\begin{proof}
	Taking $\bm v = \varepsilon_r^{\bm q}$ in \eqref{POD_error_a}, $w = \varepsilon_r^u$ in \eqref{POD_error_b},  $\mu = -\varepsilon_r^{\widehat u}$ in \eqref{POD_error_c}, and adding the resulting four equations gives
	\begin{align*}
		\hspace{2em}&\hspace{-2em} (\partial_t\varepsilon^u_r,\varepsilon^u_r)_{\mathcal T_h}+( c\varepsilon_r^{\bm q},\varepsilon_r^{\bm q})_{\mathcal T_h} +\Theta_r\\
		&= (c\bm\Pi \bm{q}_h -c \bm{q}_h , \varepsilon_r^{\bm q})_{\mathcal{T}_h}+ \left\langle (\bm\Pi {\bm{q}_h} - \bm q_h)\cdot \bm{n}, \varepsilon_r^u - \varepsilon_r^{\widehat u} \right\rangle_{\partial {\mathcal{T}_h}} \\
		& \quad + \left\langle \tau(\Pi {{u}_h} - u_h), \varepsilon_r^u - \varepsilon_r^{\widehat u}  \right\rangle_{\partial {\mathcal{T}_h}}- \left\langle \tau(P_M {\widehat{u}_h} - \widehat u_h), \varepsilon_r^u \right\rangle_{\partial {\mathcal{T}_h}} \\
		&\quad + (u_h - \Pi u_h,\nabla\cdot \varepsilon_r^{\bm q})_{\mathcal{T}_h} + \left\langle P_M \widehat{u}_h -\widehat u_h,\varepsilon_r^{\bm q} \cdot\bm n \right\rangle_{\partial {\mathcal{T}_h}}+ (\bm q_h - \bm\Pi\bm{q}_h,\nabla \varepsilon_r^u)_{\mathcal{T}_h},
	\end{align*}
	where 
	\begin{align*}
		\Theta_r &=- (\varepsilon_r^u, \nabla\cdot \varepsilon_r^{\bm q})_{\mathcal T_h} - (\varepsilon_r^{\bm q} ,\nabla \varepsilon_r^u)_{\mathcal{T}_h} + \left\langle\varepsilon_r^{\bm q}\cdot \bm{n} + \tau (\varepsilon_r^u-\varepsilon_r^{\widehat u}),\varepsilon_r^u \right\rangle_{\partial {\mathcal{T}_h}}\\
		&\quad -
		\left\langle  \tau (\varepsilon_r^u-\varepsilon_r^{\widehat u}),\varepsilon_r^{\widehat u} \right\rangle_{\partial {\mathcal{T}_h}}\\
		& = \left\langle  \tau (\varepsilon_r^u-\varepsilon_r^{\widehat u}),\varepsilon_r^u-\varepsilon_r^{\widehat u} \right\rangle_{\partial {\mathcal{T}_h}}.
	\end{align*}
	Moreover,
	\begin{align*}
		(u_h - \Pi u_h,\nabla\cdot \varepsilon_r^{\bm q})_{\mathcal{T}_h} &= \left\langle  u_h - \Pi u_h,  \varepsilon_r^{\bm q} \cdot \bm n\right\rangle_{\partial \mathcal T_h} - 	(\nabla (u_h - \Pi u_h), \varepsilon_r^{\bm q})_{\mathcal{T}_h},\\
		(\bm q_h - \bm\Pi\bm{q}_h,\nabla \varepsilon_r^u)_{\mathcal{T}_h}&= \left\langle (\bm q_h - \bm\Pi\bm{q}_h) \cdot \bm n, \varepsilon_r^u\right\rangle_{\partial\mathcal{T}_h} -  (\nabla \cdot(\bm q_h - \bm\Pi\bm{q}_h), \varepsilon_r^u)_{\mathcal{T}_h}.
	\end{align*}
\end{proof}

Next, we obtain the main energy estimate.
\begin{theorem}\label{immediate}
	For any $t>0$, we have 
	\begin{align*}
		\norm{\varepsilon^u_r(t)}^2_{\mathcal T_h} +\int_0^t Z(s)ds
		\le A(t) + \int_0^t B(s) \norm{\varepsilon^u_r}_{\mathcal T_h} ds,
	\end{align*}
	where 
	\begin{align*}
		Z &= c_0\norm{ \varepsilon_h^{\bm q}}_{\mathcal T_h}^2+\tau_* \norm{\varepsilon_h^{\widehat u} - \varepsilon_h^{u}}_{\partial {\mathcal{T}_h}}^2,\\
		A &= \norm{\Pi u_h(\cdot,0) - u_{r,0}}_{\mathcal T_h}^2 + \int_0^t   \frac{16c_1^2}{c_0} \norm {\bm\Pi \bm{q}_h - \bm{q}_h}_{\mathcal T_h}^2 + \frac{8}{\tau_*}\norm{(\bm\Pi {\bm{q}_h} - \bm q_h) \cdot \bm n}_{\partial {\mathcal{T}_h}}^2 ds\\
		& \quad + \int_0^t \left(\frac{8 (\tau^*)^2}{\tau_*} +\frac{16C}{c_0h} \right)\norm{\Pi {u}_h - u_h}_{\partial{\mathcal{T}_h}}^2 + \frac{16}{c_0}\norm{\nabla (u_h - \Pi u_h)}_{\mathcal T_h}^2 \\
		&\quad+\frac{16C}{c_0h} \norm{P_M \widehat u_h-  \widehat u_h}_{{\partial\mathcal{T}_h}}^2 ds,\\
		B &= 2\left(C^{1/2}\norm {(\bm q_h - \bm\Pi\bm{q}_h) \cdot \bm n}_{\partial \mathcal T_h} h^{-1/2} +  \norm {\nabla \cdot(\bm q_h - \bm\Pi\bm{q}_h)}_{\mathcal{T}_h}\right.\\
		&\qquad \quad\left.+C^{1/2} \tau^* \norm{P_M\widehat u_h- \widehat u_h}_{\partial \mathcal T_h} h^{-1/2}\right).
	\end{align*}
	Here, $0<\tau_* = \min_{K\in\mathcal T_h} \tau\le \tau^* = \max_{K\in\mathcal T_h} \tau$, and $C$ is the constant defined in \eqref{tra}.
\end{theorem}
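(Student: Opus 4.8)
The plan is to start from the energy identity established in the preceding lemma and turn it into the stated integral inequality in three moves: recognize the time-derivative term, bound the coercive left-hand side from below by $Z$, and then estimate each projection-error term on the right-hand side, sorting the terms according to whether they can be absorbed into $Z$ or not.

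First I would write $(\partial_t\varepsilon_r^u,\varepsilon_r^u)_{\mathcal T_h}=\tfrac12\frac{d}{dt}\|\varepsilon_r^u\|_{\mathcal T_h}^2$ and use the structural bounds $c\ge c_0$ and $\tau\ge\tau_*$ to get $(c\varepsilon_r^{\bm q},\varepsilon_r^{\bm q})_{\mathcal T_h}\ge c_0\|\varepsilon_r^{\bm q}\|_{\mathcal T_h}^2$ and $\langle\tau(\varepsilon_r^u-\varepsilon_r^{\widehat u}),\varepsilon_r^u-\varepsilon_r^{\widehat u}\rangle_{\partial\mathcal T_h}\ge\tau_*\|\varepsilon_r^{\widehat u}-\varepsilon_r^u\|_{\partial\mathcal T_h}^2$, so the left side dominates $\tfrac12\frac{d}{dt}\|\varepsilon_r^u\|_{\mathcal T_h}^2+Z$. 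The heart of the argument is the estimate of the right-hand side, whose terms I would split into two groups. The absorbable group consists of those paired with $\varepsilon_r^{\bm q}$ (the two volume terms carrying $c\bm\Pi\bm q_h-c\bm q_h$ and $\nabla(u_h-\Pi u_h)$, together with the two boundary terms carrying $\varepsilon_r^{\bm q}\cdot\bm n$) and those paired with the jump $\varepsilon_r^u-\varepsilon_r^{\widehat u}$ (the boundary terms carrying $(\bm\Pi\bm q_h-\bm q_h)\cdot\bm n$ and $\tau(\Pi u_h-u_h)$). For these I would apply Cauchy--Schwarz followed by Young's inequality; for the boundary terms involving $\varepsilon_r^{\bm q}\cdot\bm n$ I would first invoke the trace inequality \Cref{trace_ine} to replace the boundary norm of $\varepsilon_r^{\bm q}$ by $C h^{-1/2}$ times its volume norm, which is exactly what produces the $C/h$ factors. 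Choosing the Young weights so that the accumulated coefficients on $\|\varepsilon_r^{\bm q}\|_{\mathcal T_h}^2$ and on $\|\varepsilon_r^{\widehat u}-\varepsilon_r^u\|_{\partial\mathcal T_h}^2$ are absorbed into the coercive terms, the leftover squared projection errors become precisely the integrand of $A$ (the coefficients $16$ and $8$ being the bookkeeping from apportioning the weights among the competing terms).

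The non-absorbable group consists of the three terms paired with $\varepsilon_r^u$ alone: $-\langle\tau(P_M\widehat u_h-\widehat u_h),\varepsilon_r^u\rangle_{\partial\mathcal T_h}$, $\langle(\bm q_h-\bm\Pi\bm q_h)\cdot\bm n,\varepsilon_r^u\rangle_{\partial\mathcal T_h}$, and $-(\nabla\cdot(\bm q_h-\bm\Pi\bm q_h),\varepsilon_r^u)_{\mathcal T_h}$. These cannot be absorbed into $Z$, since $Z$ controls $\varepsilon_r^{\bm q}$ and the jump $\varepsilon_r^u-\varepsilon_r^{\widehat u}$ but not $\varepsilon_r^u$ itself. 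For each I would apply Cauchy--Schwarz, and for the two boundary terms use \Cref{trace_ine} once more to bound $\|\varepsilon_r^u\|_{\partial\mathcal T_h}$ by $h^{-1/2}\|\varepsilon_r^u\|_{\mathcal T_h}$ (the divergence term is already a volume term). Collecting the coefficients multiplying $\|\varepsilon_r^u\|_{\mathcal T_h}$ gives exactly $B$. Finally I would integrate the resulting pointwise-in-time inequality over $(0,t)$, apply the fundamental theorem of calculus to the time-derivative term, move the initial value $\tfrac12\|\varepsilon_r^u(0)\|_{\mathcal T_h}^2=\tfrac12\|\Pi u_h(\cdot,0)-u_{r,0}\|_{\mathcal T_h}^2$ to the right, and multiply through by $2$ to clear the factor $\tfrac12$; this accounts for the overall factor $2$ in $B$ and for the leading term of $A$.

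I expect the main obstacle to be the careful handling of the boundary contributions. Because $Z$ supplies coercivity only in the volume norm of $\varepsilon_r^{\bm q}$ and in the boundary norm of the jump $\varepsilon_r^u-\varepsilon_r^{\widehat u}$, every estimate landing on $\varepsilon_r^{\bm q}\cdot\bm n$ or on $\varepsilon_r^u$ alone must pass through the trace inequality, and the Young weights must be apportioned so that the total absorbed into $c_0\|\varepsilon_r^{\bm q}\|_{\mathcal T_h}^2$ and $\tau_*\|\varepsilon_r^{\widehat u}-\varepsilon_r^u\|_{\partial\mathcal T_h}^2$ stays within the available budget while the full $Z$ remains on the left. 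It is essential to use \Cref{trace_ine} here rather than an inverse inequality on the POD spaces, since (as noted before \Cref{POD_error_H1}) the latter carries constants that blow up with $r$; together with the projection-error identities of \Cref{POD_error_H1}, this is ultimately what will let the bound converge as $r$ grows.
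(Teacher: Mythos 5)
Your proposal is correct and follows essentially the same route as the paper's proof: the same energy identity as starting point, the same coercivity lower bound yielding $Z$, the same splitting of the right-hand side into terms absorbable via Young's inequality (with the trace inequality of \Cref{trace_ine} supplying the $C/h$ factors for the $\varepsilon_r^{\bm q}\cdot\bm n$ terms) versus the three terms paired with $\varepsilon_r^u$ alone that form $B$, followed by integration in time and clearing the factor $\tfrac12$. Your accounting of the weights (half of $c_0\|\varepsilon_r^{\bm q}\|_{\mathcal T_h}^2$ and $\tau_*\|\varepsilon_r^u-\varepsilon_r^{\widehat u}\|_{\partial\mathcal T_h}^2$ absorbed, then restored by the final multiplication by $2$, producing the coefficients $16$ and $8$) matches the paper exactly.
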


\begin{proof}
	We have
	\begin{align*}
		\hspace{2em}&\hspace{-2em} \frac 1 2 \frac{d}{dt}\norm{\varepsilon^u_r}^2_{\mathcal T_h} +c_0\norm{ \varepsilon_r^{\bm q}}_{\mathcal T_h}^2+\tau_* \norm{\varepsilon_r^{u} - \varepsilon_r^{\widehat u} }_{\partial {\mathcal{T}_h}}^2\\
		&\le 
		c_1 \norm {\bm\Pi \bm{q}_h - \bm{q}_h}_{\mathcal T_h} \norm{\varepsilon_r^{\bm q}}_{\mathcal{T}_h} +\norm{(\bm q_h - \bm\Pi\bm{q}_h) \cdot \bm n}_{\partial {\mathcal{T}_h}} \norm{\varepsilon_r^{u} - \varepsilon_r^{\widehat u}}_{\partial {\mathcal{T}_h}}\\
		& \quad +
		\tau^* \norm{\Pi {u}_h - u_h}_{\partial {\mathcal{T}_h}}
		\norm{\varepsilon_r^{u} - \varepsilon_r^{\widehat u}}_{\partial {\mathcal{T}_h}} +\tau^* \norm{P_M\widehat u_h- \widehat u_h}_{\partial \mathcal T_h} \norm{\varepsilon_r^{u}}_{\partial\mathcal T_h}\\
		& \quad + \norm{\Pi {{u}_h} -  u_h}_{\partial{\mathcal{T}_h}} \norm{\varepsilon_r^{\bm q}}_{\partial\mathcal T_h} + \norm{\nabla (u_h - \Pi u_h)}_{\mathcal{T}_h} \norm {\varepsilon_r^{\bm q}}_{\mathcal{T}_h} \\
		&\quad +\norm{P_M \widehat u_h-  \widehat u_h}_{{\partial\mathcal{T}_h}}  \norm{\varepsilon_r^{\bm q}}_{\partial \mathcal T_h} +\norm {(\bm q_h - \bm\Pi\bm{q}_h) \cdot \bm n}_{\partial \mathcal T_h}  \norm {\varepsilon_r^u}_{\partial\mathcal{T}_h} \\
		&\quad+  \norm {\nabla \cdot(\bm q_h - \bm\Pi\bm{q}_h)}_{\mathcal{T}_h} \norm {\varepsilon_r^u}_{\mathcal{T}_h}\\
		&\le \frac{8c_1^2}{c_0} \norm {\bm\Pi \bm{q}_h - \bm{q}_h}_{\mathcal T_h}^2 +\frac {c_0}{8}\norm{\varepsilon_r^{\bm q}}_{\mathcal{T}_h}^2 +\frac{4}{\tau_*}\norm{(\bm q_h - \bm\Pi\bm{q}_h) \cdot \bm n}_{\partial {\mathcal{T}_h}}^2 \\
		&\quad +  \frac {\tau_*} {4} \norm{\varepsilon_r^{u} - \varepsilon_r^{\widehat u}}_{\partial {\mathcal{T}_h}}^2 +\frac{4 (\tau^*)^2}{\tau_*}\norm{\Pi {u}_h - u_h}_{\partial{\mathcal{T}_h}}^2 + \frac {\tau_*} {4} \norm{\varepsilon_r^{u} - \varepsilon_r^{\widehat u}}_{\partial {\mathcal{T}_h}}^2 \\
		&\quad + C^{1/2}\tau^* \norm{P_M\widehat u_h- \widehat u_h}_{\partial \mathcal T_h} h^{-1/2} \norm{\varepsilon_r^{u}}_{\mathcal{T}_h}+ \frac{8C}{c_0h}\norm{\Pi {{u}_h} -  u_h}_{\partial{\mathcal{T}_h}}^2 \\
		&\quad+ \frac{c_0}{8} \norm{\varepsilon_r^{\bm q}}_{\mathcal T_h}^2 + \frac{8}{c_0}\norm{\nabla (u_h - \Pi u_h)}_{\mathcal T_h}^2 + \frac{c_0}{8} \norm {\varepsilon_r^{\bm q}}_{\mathcal{T}_h}^2\\
		& \quad +\frac{8C}{c_0h} \norm{P_M \widehat u_h-  \widehat u_h}_{{\partial\mathcal{T}_h}}^2 +  \frac{c_0}{8} \norm{\varepsilon_r^{\bm q}}_{\mathcal T_h}^2 \\
		&\quad+C^{1/2}\norm {(\bm q_h - \bm\Pi\bm{q}_h) \cdot \bm n}_{\partial \mathcal T_h} h^{-1/2} \norm {\varepsilon_r^u}_{\mathcal{T}_h}+\norm {\nabla \cdot(\bm q_h - \bm\Pi\bm{q}_h)}_{\mathcal{T}_h} \norm {\varepsilon_r^u}_{\mathcal{T}_h},
	\end{align*}
	which means
	\begin{align*}
		\hspace{1em}&\hspace{-1em} \frac{d}{dt}\norm{\varepsilon^u_r}^2_{\mathcal T_h} +{c_0}\norm{ \varepsilon_r^{\bm q}}_{\mathcal T_h}^2+\tau_* \norm{\varepsilon_r^{u} - \varepsilon_r^{\widehat u} }_{\partial {\mathcal{T}_h}}^2\\
		& \le \frac{16c_1^2}{c_0} \norm {\bm\Pi \bm{q}_h - \bm{q}_h}_{\mathcal T_h}^2 + \frac{8}{\tau_*}\norm{(\bm q_h - \bm\Pi\bm{q}_h) \cdot \bm n}_{\partial {\mathcal{T}_h}}^2 \\
		&\quad+\left(\frac{8 (\tau^*)^2}{\tau_*} +\frac{16C}{c_0h}\right)\norm{\Pi {u}_h - u_h}_{\partial{\mathcal{T}_h}}^2\\
		& \quad + \frac{16}{c_0}\norm{\nabla (u_h - \Pi u_h)}_{\mathcal T_h}^2 
		+\frac{16C}{c_0h} \norm{P_M \widehat u_h-  \widehat u_h}_{{\partial\mathcal{T}_h}}^2\\
		& \quad +2 C^{1/2}\norm {(\bm q_h - \bm\Pi\bm{q}_h) \cdot \bm n}_{\partial \mathcal T_h} h^{-1/2} \norm{\varepsilon_r^{u}}_{\mathcal{T}_h}   + 2 \norm {\nabla \cdot(\bm q_h - \bm\Pi\bm{q}_h)}_{\mathcal{T}_h} \norm{\varepsilon_r^{u}}_{\mathcal{T}_h}\\
		& \quad + 2 C^{1/2}\tau^* \norm{P_M\widehat u_h- \widehat u_h}_{\partial \mathcal T_h} h^{-1/2} \norm{\varepsilon_r^{u}}_{\mathcal{T}_h}.
	\end{align*}
	Integrating in time over the interval $(0,t)$ and using $\varepsilon_r^u(0) = \Pi u_h(\cdot,0) - u_{r,0}$ gives the result.
\end{proof}

To estimate the errors, we use the following result whose proof can be found in \cite{chabaud2012uniform}.
\begin{lemma}\label{cock_in}
	Assume for all $t>0$ we have 
	\begin{align*}
		\eta^2(t) +\int_0^t Z(s)ds \le A(t)+ \int_0^t B(s)\eta(s) ds,
	\end{align*}
	for some nonnegative functions $A,B, Z$ in $L^{\infty}(\mathbb R^+).$ Then, for any $t>0$,
	\begin{align*}
		\eta^2(t) +\int_0^t Z(s)ds \le \left( \left[ \max_{0\le s\le t}A(s) \right]^{1/2} +\frac 1 2 \int_0^t B(s)ds\right)^2.
	\end{align*}
\end{lemma}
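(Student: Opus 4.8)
The plan is to treat this as a Bihari-type (nonlinear Gronwall) inequality: freeze the inhomogeneous term by passing to the running maximum of $A$, and then integrate a differential inequality for a square root. Throughout I fix $t>0$, set $A^* = \max_{0\le s\le t}A(s)$ (a constant once $t$ is fixed), and introduce $y(s) = \int_0^s B(\tau)\eta(\tau)\,d\tau$, which is nondecreasing and absolutely continuous since $B\in L^\infty$ and $\eta$ is integrable on $[0,t]$.

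First I would discard the nonnegative term $\int_0^s Z$ from the hypothesis and use $A(s)\le A^*$ for all $s\in[0,t]$ to get $\eta^2(s)\le A^*+y(s)$; in particular $A^*+y(s)\ge \eta^2(s)\ge 0$, so $\eta(s)\le \sqrt{A^*+y(s)}$. Since $B\ge 0$, this gives the a.e.\ differential inequality $y'(s)=B(s)\eta(s)\le B(s)\sqrt{A^*+y(s)}$, equivalently $\frac{d}{ds}\sqrt{A^*+y(s)}=\frac{y'(s)}{2\sqrt{A^*+y(s)}}\le \frac12 B(s)$. Integrating from $0$ to $t$ and using $y(0)=0$ yields $\sqrt{A^*+y(t)}\le \sqrt{A^*}+\frac12\int_0^t B(s)\,ds$. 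Squaring this and combining with the full hypothesis $\eta^2(t)+\int_0^t Z\,ds\le A(t)+y(t)\le A^*+y(t)$ gives exactly the claimed estimate.

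The one point requiring care is the possible vanishing of $A^*+y(s)$, which would invalidate the division by $2\sqrt{A^*+y(s)}$ in the chain rule. I would handle this by the standard regularization: replace $A^*$ by $A^*+\epsilon$ with $\epsilon>0$, in which case $s\mapsto\sqrt{A^*+\epsilon+y(s)}$ is Lipschitz on $[0,t]$ and the fundamental theorem of calculus applies cleanly, obtaining $\sqrt{A^*+\epsilon+y(t)}\le \sqrt{A^*+\epsilon}+\frac12\int_0^t B\,ds$; letting $\epsilon\to 0^+$ recovers the bound above. This $\epsilon$-regularization is really the only subtlety, as every other step is an elementary manipulation.
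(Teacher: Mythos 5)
Your proof is correct. Note that the paper does not actually prove this lemma at all --- it defers to \cite{chabaud2012uniform} --- so there is no in-paper argument to compare against; your square-root (Bihari-type) argument is the standard proof of this inequality and is essentially the one used in that reference: with $A^*=\max_{0\le s\le t}A(s)$ and $y(s)=\int_0^s B\eta$, one observes $y'\le B\,(A^*+y)^{1/2}$, integrates the resulting bound $\frac{d}{ds}(A^*+\epsilon+y)^{1/2}\le \frac12 B$, and lets $\epsilon\to 0$. Your $\epsilon$-regularization correctly disposes of the only delicate point (division by a possibly vanishing square root), and the chain of inequalities $\eta^2(t)+\int_0^t Z\le A^*+y(t)\le\bigl((A^*)^{1/2}+\frac12\int_0^t B\bigr)^2$ closes the argument. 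One inessential slip: $y$ need not be nondecreasing unless $\eta\ge 0$ (the lemma assigns no sign to $\eta$), but your proof never uses monotonicity --- it only needs $A^*+y(s)\ge\eta^2(s)\ge 0$, which follows from the hypothesis applied at each $s\in(0,t]$ --- so nothing breaks.
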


We are now ready to prove the main error analysis result.  We note that we do not attempt to carefully track the constants appearing in the error bounds.  After the proof, we discuss some features of the error bounds.
\begin{theorem}
	Let the main assumption hold.  If $(\bm q_h, u_h)$ and $(\bm q_r, u_r)$ are the HDG solution of the heat equation \eqref{HDG_discrete2} and the HDG-POD reduced order model \eqref{Heat}, respectively, then
	\begin{align*}
		\int_0^T \norm{u_h- u_r}_{\mathcal T_h}^2dt  &\le  C_1(T) \norm{\Pi u_h(\cdot,0) - u_{r,0}}_{\mathcal T_h}^2 + \frac{C_2(T)}{h} \Lambda_r^u,\\
		\int_0^T \norm{ \bm q_h- \bm q_r}_{\mathcal T_h}^2dt &  \le  C_3(T) \norm{\Pi u_h(\cdot,0) - u_{r,0}}_{\mathcal T_h}^2 + \frac{C_4(T)}{h} \Lambda_r^{\bm q},
	\end{align*}
	where
	\begin{align*}
		\Lambda_r^u &= \sum_{i>r_1}  \lambda_i^{\bm q}(1+\|\varphi_i^{\bm q} \cdot \bm n\|_{\partial \mathcal T_h}^2+ \| \nabla \cdot \varphi_i^{\bm q} \|_{\mathcal T_h}^2)  \\
		&\quad+\sum_{i>r_2}  \lambda_i^{u}(1+\|\varphi_i^{u}\|_{\partial \mathcal T_h}^2+\| \nabla \varphi_i^{u} \|_{ \mathcal T_h}^2) + \sum_{i>r_3} \lambda_i^{\widehat u},\\
		\Lambda_r^{\bm q} &= \sum_{i>r_1}  \lambda_i^{\bm q}(1+\|\varphi_i^{\bm q} \cdot \bm n\|_{\partial \mathcal T_h}^2+\| \nabla \cdot \varphi_i^{\bm q} \|_{\mathcal T_h}^2)  \\
		&\quad+\sum_{i>r_2}  \lambda_i^{u}(\|\varphi_i^{u}\|_{\partial \mathcal T_h}^2+\| \nabla \varphi_i^{u} \|_{ \mathcal T_h}^2) + \sum_{i>r_3} \lambda_i^{\widehat u},
	\end{align*}
	and $C_1,C_2,C_3,C_4$ are constants independent of the reduced order $r$ and the mesh spacing $ h $. Furthermore, for $ h $ fixed, if $\norm{\Pi u_h(\cdot,0) - u_{r,0}}_{\mathcal T_h}\to 0$ as $r$ increases, then $u_r\to u_h$ in $L^2(0,T;L^2(\Omega))$ and  $\bm q_r\to \bm q_h$ in $ L^2(0,T;[L^2(\Omega)]^d)$.
\end{theorem}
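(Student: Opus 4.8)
The plan is to split each error into a POD-projection part and a part lying in the reduced space, and then to control the latter using the energy estimate of \Cref{immediate} together with the Gronwall-type inequality in \Cref{cock_in}. Concretely, write $u_h - u_r = (u_h - \Pi u_h) + \varepsilon_r^u$ and $\bm q_h - \bm q_r = (\bm q_h - \bm\Pi \bm q_h) + \varepsilon_r^{\bm q}$, so that by the triangle inequality
\begin{align*}
\int_0^T \norm{u_h - u_r}_{\mathcal T_h}^2\, dt &\le 2\int_0^T \norm{u_h - \Pi u_h}_{\mathcal T_h}^2\, dt + 2\int_0^T \norm{\varepsilon_r^u}_{\mathcal T_h}^2\, dt,\\
\int_0^T \norm{\bm q_h - \bm q_r}_{\mathcal T_h}^2\, dt &\le 2\int_0^T \norm{\bm q_h - \bm\Pi \bm q_h}_{\mathcal T_h}^2\, dt + 2\int_0^T \norm{\varepsilon_r^{\bm q}}_{\mathcal T_h}^2\, dt.
\end{align*}
The first term in each line is immediately $2\sum_{i>r_2}\lambda_i^u$ and $2\sum_{i>r_1}\lambda_i^{\bm q}$ by the $L^2$ POD projection identities \eqref{projection_error}; these supply the unweighted $\lambda_i^u$ term in $\Lambda_r^u$ and part of the $\lambda_i^{\bm q}$ contribution common to both $\Lambda_r^u$ and $\Lambda_r^{\bm q}$. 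It then remains to estimate the reduced-space parts $\varepsilon_r^u$ and $\varepsilon_r^{\bm q}$.

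For these I would apply \Cref{cock_in} to the inequality of \Cref{immediate} with $\eta(t) = \norm{\varepsilon_r^u(t)}_{\mathcal T_h}$, obtaining for every $t\in(0,T]$
\begin{align*}
\norm{\varepsilon_r^u(t)}_{\mathcal T_h}^2 + \int_0^t Z\,ds \le \left(\Big[\max_{0\le s\le t}A(s)\Big]^{1/2} + \tfrac12\int_0^t B\,ds\right)^2 \le 2\max_{0\le s\le t}A(s) + \tfrac12\left(\int_0^t B\,ds\right)^2.
\end{align*}
The key point is that every term in $A$ and $B$ is a space norm of a POD projection error, so each integrates in time to a weighted tail sum of neglected POD eigenvalues: the $L^2$ terms reduce via \eqref{projection_error}, while the broken-gradient, divergence, and face-trace terms reduce via the seminorm identities of \Cref{POD_error_H1}. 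Thus $\max_{[0,t]}A$ is bounded by $\norm{\Pi u_h(\cdot,0)-u_{r,0}}_{\mathcal T_h}^2$ plus a linear combination of $\sum_{i>r_1}\lambda_i^{\bm q}(1+\norm{\varphi_i^{\bm q}\cdot\bm n}^2_{\partial\mathcal T_h}+\norm{\nabla\cdot\varphi_i^{\bm q}}^2_{\mathcal T_h})$, $\sum_{i>r_2}\lambda_i^u(\norm{\varphi_i^u}^2_{\partial\mathcal T_h}+\norm{\nabla\varphi_i^u}^2_{\mathcal T_h})$, and $\sum_{i>r_3}\lambda_i^{\widehat u}$, with several coefficients carrying a factor $h^{-1}$ inherited from the trace inequality (\Cref{trace_ine}). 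For $\int_0^t B$ I would apply Cauchy--Schwarz in time to each of the three summands of $B$, so that $\big(\int_0^t B\big)^2$ is likewise controlled by $T$ times weighted eigenvalue tails (again with $h^{-1}$ factors from the $h^{-1/2}$ weights in $B$). Combining these, both $\norm{\varepsilon_r^u(t)}_{\mathcal T_h}^2$ and $\int_0^t Z\,ds$ are bounded by $\norm{\Pi u_h(\cdot,0)-u_{r,0}}_{\mathcal T_h}^2$ plus $h^{-1}$ times a weighted tail sum.

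To finish, for the scalar bound I would integrate the pointwise estimate over $(0,T)$, using $\int_0^T\norm{\varepsilon_r^u}_{\mathcal T_h}^2\,dt \le T\max_{0\le t\le T}\norm{\varepsilon_r^u(t)}_{\mathcal T_h}^2$; adding the projection term $2\sum_{i>r_2}\lambda_i^u$ produces exactly $\Lambda_r^u$, where the extra $1$ multiplying $\lambda_i^u$ is precisely this $L^2$ projection piece. For the flux bound I would use that the left-hand side of the \Cref{cock_in} conclusion controls $\int_0^t Z$ and that $Z \ge c_0\norm{\varepsilon_r^{\bm q}}_{\mathcal T_h}^2$, so $\int_0^T\norm{\varepsilon_r^{\bm q}}_{\mathcal T_h}^2\,dt \le c_0^{-1}\int_0^T Z\,dt$; adding the projection term $2\sum_{i>r_1}\lambda_i^{\bm q}$ yields $\Lambda_r^{\bm q}$, which lacks the unweighted $\lambda_i^u$ term precisely because no $L^2$ projection of $u$ enters the flux estimate. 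The convergence statement then follows: for fixed $h$ the weighted tails $\Lambda_r^u,\Lambda_r^{\bm q}$ tend to $0$ as $r$ increases (the weights are finite and the spaces finite-dimensional, so the tails eventually vanish), and the initial-data term vanishes by hypothesis.

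The main obstacle I anticipate is organizational rather than conceptual: one must match each projection-error term appearing in $A$ and $B$ to the correct identity from \eqref{projection_error} or \Cref{POD_error_H1}, track which terms acquire the $h^{-1}$ factor, and verify that the bookkeeping reproduces exactly the weighted combinations defining $\Lambda_r^u$ and $\Lambda_r^{\bm q}$ --- in particular the asymmetry whereby $\Lambda_r^u$ carries an unweighted $\lambda_i^u$ term absent from $\Lambda_r^{\bm q}$. The one genuinely delicate analytic point is the use of Cauchy--Schwarz in time to bound $\int_0^t B$, since $B$ is only $L^2$-in-time a priori; this is what introduces the factor of $T$ and keeps the constants $C_1,\dots,C_4$ independent of $r$ and $h$.
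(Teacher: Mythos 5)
Your proposal is correct and follows essentially the same route as the paper: split the errors via the POD projections, apply \Cref{cock_in} (with Cauchy--Schwarz in time on $\int_0^t B\,ds$) to the energy estimate of \Cref{immediate}, and convert every term of $A$ and $B$ into weighted eigenvalue tails using \eqref{projection_error} and \Cref{POD_error_H1}, tracking the $h^{-1}$ factors from the trace inequality. Your explicit treatment of the flux error through $Z \ge c_0\norm{\varepsilon_r^{\bm q}}_{\mathcal T_h}^2$ is precisely what the paper compresses into ``a similar argument gives the estimate for $\bm q_h - \bm q_r$.''
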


\begin{proof}
	For $t \in(0,T]$, we have 
	\begin{align*}
		\norm{\varepsilon^u_r(t)}^2_{\mathcal T_h} +\int_0^t Z(s)ds
		\le A(t) + \int_0^t B(s) \norm{\varepsilon^u_r}_{\mathcal T_h} ds,
	\end{align*}
	where $Z,A,B$ are defined in Theorem \ref{immediate}.  Apply Lemma \ref{cock_in} and Jensen's inequality to get 
	\begin{align*}
		\norm{\varepsilon_r^u(t)}_{\mathcal T_h}^2+\int_0^t Z(s)ds &\le \left( A^{1/2}(t) +\frac 1 2 \int_0^t B(s)ds\right)^2\\
		&\le  2 A(t) + t \int_0^t B^2(s)ds\\
		& \le 2 \norm{\Pi u_h(\cdot,0) - u_{r,0}}_{\mathcal T_h}^2 + \frac{C(t)}{h} \int_0^t \theta^2(s) ds,
	\end{align*}
	where 
	\begin{align*}
		\theta^2(s) &= \norm{\bm\Pi {\bm{q}_h} - \bm q_h}_{ {\mathcal{T}_h}}^2  + \norm{(\bm\Pi {\bm{q}_h} - \bm q_h) \cdot \bm n}_{ \partial{\mathcal{T}_h}}^2  +  \norm{\nabla \cdot (\bm\Pi {\bm{q}_h} - \bm q_h)}_{{\mathcal{T}_h}}^2\\
		&\quad + \norm{\Pi {{u}_h} - u_h}_{{\partial \mathcal{T}_h}}^2
		+\norm{\nabla (\Pi {{u}_h} - u_h)}_{{\mathcal{T}_h}}^2+ \norm{P_M {\widehat{u}_h} - \widehat u_h}_{{\partial\mathcal{T}_h}}^2.
	\end{align*}
	By the POD projection error results, we get
	\begin{align*}
		\int_0^t \theta^2(s) ds \le  \int_0^T \theta^2(s) ds = \Lambda_r^{\bm q}.
	\end{align*}
	Therefore, we have 
	\begin{align*}
		\norm{\varepsilon_r^u(t)}_{\mathcal T_h}^2  \le 2 \norm{\Pi u_h(\cdot,0) - u_{r,0}}_{\mathcal T_h}^2 + \frac{C(t)}{h}  \Lambda_r^{\bm q}.
	\end{align*}
	Since $u_h - u_r = u_h-\Pi u_h +\Pi u_h - u_r = u_h-\Pi u_h +\varepsilon_h^u$, we have
	\begin{align*}
		\int_0^T \norm{u_h- u_r}_{\mathcal T_h}^2dt  &\le \int_0^T \norm{u_h-\Pi u_h}_{\mathcal T_h}^2dt +\int_0^T \norm{\varepsilon_r^u}_{\mathcal T_h}^2dt\\
		& \le  \sum_{i > r_2} \lambda_i^u + 2 T \norm{\Pi u_h(\cdot,0) - u_{r,0}}_{\mathcal T_h}^2  + \frac{C(T)}{h}  \Lambda_r^{\bm q}.
	\end{align*}
	This proves the first error bound; a similar argument gives the estimate for $\bm q_h - \bm q_r$.
	%
\end{proof}

\begin{remark}
	For the initial condition $u_{r,0} = \Pi u_h(\cdot, 0)$, the first term in each error bound equals zero.
\end{remark}

In the error bounds for both $ u_h $ and $ \bm q_h $, the bounds contain a factor of $ h^{-1/2} $ multiplying the POD projection error terms $ (\Lambda_r^u)^{1/2} $ and $ (\Lambda_r^{\bm q})^{1/2} $.  This $ h^{-1/2} $ factor increases slowly as the mesh is refined.  Since the POD eigenvalues often decay very quickly as $ r $ increases, the $ h^{-1/2} $ factor will typically be dominated by the rapid decay of the POD projection error terms $ (\Lambda_r^u)^{1/2} $ and $ (\Lambda_r^{\bm q})^{1/2} $ in the error bound.

In the HDG error analysis for the heat equation in \cite{chabaud2012uniform}, a special HDG projection is used to optimize the errors bounds and obtain optimal convergence orders for the discretization in space.  It is unclear if such a special projection can be utilized within a POD framework to optimize the resulting error bounds.  We leave this to be considered elsewhere.

\section{Numerical Results}
\label{sec:numerical_results}

We report numerical results for one 2D problem and one 3D problem.  For both examples, we use backward Euler for the time discretization and use time step size $\Delta t = 0.001$ for $ 0 \leq t \leq 1 $.  We also chose $ \tau = 1 $ for the HDG stabilization function.  We report computed errors for approximations to the time integrals from our continuous time error estimates.  Specifically, we compute the errors
\begin{align*}
	\mbox{$ \bm q $ error} &= \left( \frac{1}{N}\sum_{i=1}^N \norm{\bm q_h(\cdot,t_i)-\bm{q}_r(\cdot,t_i)}_{\mathcal T_h}^2 \right)^{1/2},\\
	\mbox{$ u $ error} &= \left( \frac{1}{N}\sum_{i=1}^N \norm{ u_h(\cdot,t_i)-u_r(\cdot,t_i)}_{\mathcal T_h}^2 \right)^{1/2}.
\end{align*}

For the 2D example, we take problem data $ u_0 =\sin(\pi x)\sin(\pi y)e^{x}\cos(y)$, $c=0.01 $, and $ f = 0 $ with domain $\Omega = [0,1]\times [0,1]$.  For the 3D example, we take problem data $c=0.01 $, $ f = 0 $, and $ u_0 = \sin(\pi x)\sin(\pi y)\sin(\pi z)e^{x}\cos(y)z $ with domain $\Omega = [0,1]\times [0,1] \times [0,1]$.  The 2D domain is partitioned into 4096 triangles, and the 3D domain is partitioned into 24576 tetrahedra.

\Cref{fig:POD_svalues} shows the POD singular values for both examples.  The POD singular values decay very rapidly for all variables, and they decay at similar rates.  It is interesting to note that the POD singular values for the scalar variable $ u_h $ and the flux $ \bm q_h $ are numerically very similar, while the POD singular values for the numerical trace $ \widehat{u}_h $ are at least an order of magnitude larger for both examples.
\begin{figure}[htb]
	\includegraphics[width=.45\linewidth]{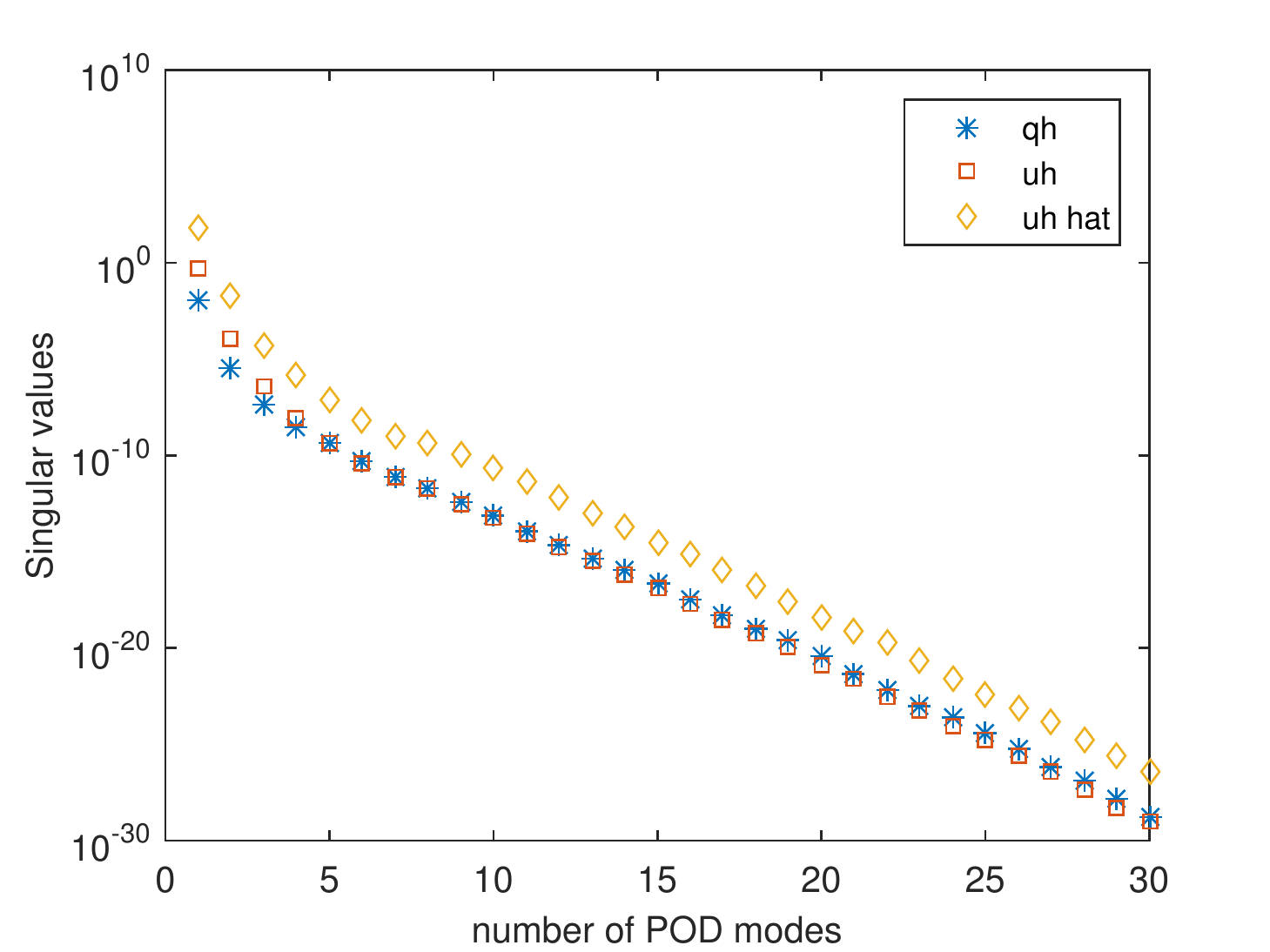}
	\includegraphics[width=.45\linewidth]{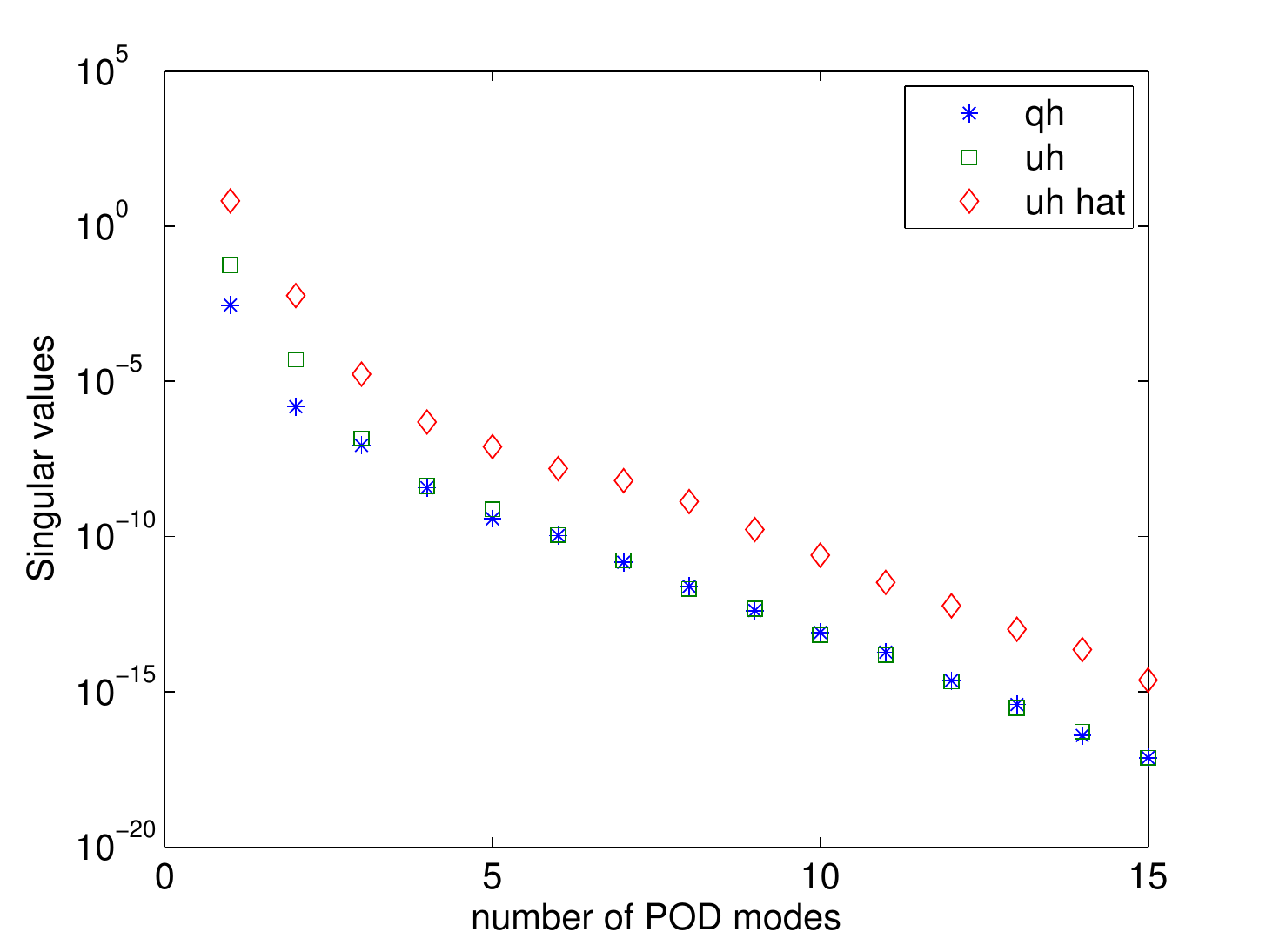}
	\caption{\label{fig:POD_svalues}POD singular values: 2D example (left) and 3D example (right)}
\end{figure}

\Cref{table:2D_example} and \Cref{table:3D_example} show the computed errors for the 2D and 3D examples, respectively.  As predicted by the error bounds, the errors decay rapidly to zero as $ r $ increases.
\begin{table}[!hbp]
	\begin{center}
		\begin{tabular}{|c|c|c|c|c|c|}
			\hline
			r &7& 10&13 &16 & 20 \\
			\hline
			$ \bm q $ error &1.782E-06& 1.670E-07&1.271E-08& 9.979E-10 & 2.940E-11 \\
			\hline
			$ u $ error &1.914E-06& 1.767E-07&1.290E-08& 8.569E-10 & 2.319E-11 \\
			\hline
		\end{tabular}
	\end{center}
	\caption{\label{table:2D_example}2D example: Computed errors}
\end{table}
\begin{table}[!hbp]
	\begin{center}
		\begin{tabular}{|c|c|c|c|c|c|}
			\hline
			r &3& 6&9 &12 & 15 \\
			\hline
			$ \bm q $ error &6.801E-05& 4.933E-06&3.941E-07& 2.363E-08 & 1.323E-09 \\
			\hline
			$ u $ error &1.434E-04& 7.048E-06&4.547E-07& 2.711E-08 & 2.090E-09 \\
			\hline
		\end{tabular}
	\end{center}
	\caption{\label{table:3D_example}3D example: Computed errors}
\end{table}

\section{Conclusion}  We proposed a new model reduction procedure combining HDG and POD for the heat equation.  The HDG-POD procedure is unlike other typical POD-based reduced order models since it does not rely on a standard Galerkin projection framework.  We provided an error analysis for the method, and showed the error bounds converge to zero as the order $ r $ of the reduced model increases.  We presented numerical results for 2D and 3D example problems that illustrated the theory.

This work contains the initial study of the HDG-POD model reduction framework.  There are many avenues that remain to be investigated.  It would be interesting to see if the error analysis can be improved.  Also, as mentioned earlier, we intend to investigate HDG-POD for more complex nonlinear PDEs.  Furthermore, detailed numerical experiments would be of interest to see how HDG-POD compares to other model reduction schemes.

\section*{Acknowledgements}  The authors thank Bernardo Cockburn for many helpful conversations.

\bibliographystyle{plain}
\bibliography{yangwen_ref_papers,yangwen_ref_books}

\end{document}